





\documentclass[sn-mathphys]{sn-jnl}



\jyear{2021}%

\theoremstyle{thmstyleone}%
\newtheorem{theorem}{Theorem}
%

\theoremstyle{thmstyletwo}%

\theoremstyle{thmstylethree}%
\newtheorem{definition}{Definition}%

\raggedbottom

\begin{document}

\title[Small Errors Imply Large Instabilities]{Small Errors Imply Large Instabilities}


\author*[1]{\fnm{Robert} \sur{Schaback}}\email{schaback@math.uni-goettingen.de}

\affil*[1]{\orgdiv{Institut f\"ur Numerische und Angewandte Mathematik},
  \orgname{Universit\"at G\"ottingen},
  \orgaddress{\street{Lotze\-stra\ss{}e 16--18},
    \city{G\"ottingen}, \postcode{37083}, 
    \country{Germany}}}


\abstract{Numerical Analysts and scientists working in applications
often observe that once they improve their techniques to get a better accuracy,
some instability creeps in through the back door. This paper
shows for a large class of numerical methods that
such a {\em Trade-off Principle} between error and stability
is unavoidable. It is an instance of a {\em no free lunch theorem}.
The setting is confined to recovery of functions
from data, but it includes solving differential equations by
writing such methods as 
a recovery of functions under constraints imposed by differential operators
and boundary values. It is shown in particular that
Kansa's Unsymmetric Collocation Method sacrifices accuracy for stability,
when compared to symmetric collocation.}

\keywords{Recovery of Functions, Trade-off Principle, 
  Kansa method, No free lunch theorem,
Collocation, Interpolation}


\pacs[MSC Classification]{41Axx, 65D05, 65D12, 65D15}

\maketitle

\section{Introduction}\label{SecIntro}
After quite some efforts to find kernels that
allow small recovery errors and well-conditioned kernel matrices
at the same time, the paper \cite{schaback:1995-1} proved that this does not
work. The result was called ``Uncertainty Relation''
or ``Trade-off Principle'' (see e.g. Holger Wendland's book
\cite{wendland:2005-1} of 2005) and received
quite some attention in the literature. It is a special case of the ``No free
lunch'' principle.
As correctly mentioned by Greg Fasshauer and Michael McCourt in
their 2015 book 
\cite{fasshauer-mccourt:2015-1},
it had quite some negative influence on the development of
the field, because it kept users from looking at
better bases than those spanned
by kernel translates. But it will turn out here that changes of bases will
not really help as long as the other ingredients are fixed.

Sparked by a question  of C.S. Chen of  the University of Southern
Mississippi in an e-mail dated Dec. 28th, 2021, this paper extends the result
of \cite{schaback:1995-1} to much more general situations. To avoid the
misconceptions implied by \cite{schaback:1995-1}, the effect of basis changes
will be discussed at various places. But most of the results here
are independent of choices of bases.

Since the scope of the paper will be quite wide,
a good deal of abstraction will be necessary later, and therefore
a classical case should be served as starters. 
Consider interpolation of functions $f\in C^{n+1}[-1,+1]$
on a set $X_n$ of points $-1\leq x_0<x_1,\ldots <x_n\leq 1$ by polynomials
$I_n(f)$ of degree at most
$n$.
The well-known error bound is
$$
\lvert f(x)-I_n(f)(x)\rvert\leq \dfrac{\|f^{(n+1)}\|_\infty}{(n+1)!}
\prod_{j=0}^n\lvert x-x_j\rvert
$$
based on Newton's formula. 
We can recast this
as an error bound
$$
\lvert f(x)-I_n(f)(x)\rvert\leq P_{X_n}(x)  \|f^{(n+1)}\|_\infty
$$
in terms of a {\em Power Function} 
$$
\begin{array}{rcl}
P_{X_n}(x)&:=&\displaystyle{ \sup_{\|f^{(n+1)}\|_\infty\leq 1}
\lvert f(x)-I_n(f)(x)\rvert}\\
&=&
\displaystyle{\frac{1}{(n+1)!}
 \prod_{j=0}^n\lvert x-x_j\rvert}.
\end{array} 
$$

If we add a point $x$ to the set $X_n$, the {\em Lagrangian} 
of degree $n+1$, vanishing on $X_n$
and being one on $x$ is
$$
u_{x,X_n}(z)=\prod_{j=0}^n\frac{z-x_j}{x-x_j}
$$
with seminorm
$$
\|u^{(n+1)}_{x,X_n}\|_\infty =(n+1)!\prod_{j=0}^n\lvert x-x_j\rvert ^{-1},
$$
leading to
\begin{equation}\label{eqPLmfUpoly} 
1=P_{X_n}(x)\cdot \|u^{(n+1)}_{x,X_n}\|_\infty.
\end{equation}
This is a {\em Trade-off Principle}:
\begin{quote}
 Small Power Functions lead to large norms of  Lagrangians.
\end{quote}
Since Lagrangians are the images of unit data in the function space,
large norms of  Lagrangians lead to large norms of interpolation operators
as maps from data to functions. Then small data variations lead to
large variations in the resulting functions, and one may call this
a grade of {\em evaluation instability}. Thus the Trade-off Principle
implies
\begin{quote}
 Small errors lead to large evaluation instabilities.
\end{quote}
The paper gives a rigid underpinning to this somewhat sloppy statement.
Recall that {\em regularization} of operator equations
works exactly in the same way: part of the recovery error
is sacrificed for better stability. 

However, the numerical computation of the Lagrangians induces
additional instabilities that are ignored here. To cope with these,
{\em barycentric} formulas were introduced for the polynomial case, see
J.P. Berrut and L.N. Trefethen \cite{berrut-trefethen:2004-1}.
For kernel-based recoveries, various methods were invented to cope with
instabilities, see the references given in Section \ref{SecOut}.

Section \ref{SecDaF} sets the stage for general
recovery methods including solving differential equations.
{\em Recovery processes} reconstruct functions from {\em data}
given as prescribed values of {\em linear functionals}, and  
the evaluation of the result will again be an application of a functional.
{\em Data} can include values of arbitrary linear operators
acting on functions, thus
rewriting methods for PDE solving as function recoveries.

Section \ref{SecRoF} introduces the form
of the recoveries considered. Nearest-neighbor methods, optimal recoveries
in Hilbert spaces, and regression in Machine Learning are
special cases described in Section \ref{SecSRS}.

The basic technique used for trade-off principles is outlined in Section
\ref{SecDTOP}, still in rather abstract
form. 
Then Section \ref{SecEaS} contains the central results, namely
trade-off principles that bound the  product of norms of errors and
norms of certain worst-case functions from below. It is shown how
the latter govern instability of the evaluation of the recovery.
The lower bounds turn into equalities in case of optimal recoveries
in Section \ref{SecORoF}.

Examples are given in Section \ref{SecExa}, including splines and 
recoveries via expansions like Fourier or Taylor series.
The connection to the older trade-off principle from \cite{schaback:1995-1}
is provided in Section \ref{SecKBRP}, followed by extensions
to unsymmetric methods like Kansa's collocation technique.
The trade-off principle holds for these as well,
but they sacrifice accuracy for evaluation stability.
Finally, the implications
for greedy adaptive methods are  sketched. 
\section{Data as Functionals}\label{SecDaF}
A fairly general and useful viewpoint on Numerical Analysis or
Computational Mathematics when working on functions is to
see {\em data} of a function as {\em values of linear functionals}.
In particular, differential equations, ordinary or partial,
just impose infinitely many restrictions on a function $u$ from some function
space $U$ by applying linear functionals. This can be conveniently written as
\begin{equation}\label{eqluf}
\lambda(u)=f_\lambda\in \mathbb{R} 
\end{equation}
for all functionals $\lambda$ from a subset $\Lambda$ of the dual $U^*$ of $U$,
the space of continuous linear functionals on $U$. The problem is to recover $u$
from the given data
$f_\lambda$. The specifics of certain
differential equation problems involving differential or boundary evaluation
operators disappear. And if users have only limited information in the sense
of just finitely many data $f_{\lambda_j}\in \mathbb{R}$ for a finite subset
$\Lambda_M=\{\lambda_1,\ldots,\lambda_M\}\subset \Lambda$, one has to use
computational techniques that get along with the available data.
This viewpoint is behind the scenes for this paper. Readers should always be
aware that differential operators may lurk behind the functionals
appearing here.

For illustration, consider a standard Poisson problem
\begin{equation}\label{eqPoisson}
\begin{array}{rcll}
\Delta u &=& f & \hbox{ in } \Omega\\ 
       u &=& g & \hbox{ in } \partial\Omega\\ 
\end{array}
\end{equation}
on a bounded domain $\Omega\subset \mathbb{R}^2$
for simplicity. The data functionals
come in two variations:
\begin{equation}\label{eqPoissonDisc}
\begin{array}{rclll}
\Delta_i(f)&:=& \Delta f(x_i),&x_i\in \overline\Omega,&1\leq i\leq M_\Delta\\
\beta_j(f)&=& f(y_j),&y_j\in \partial\Omega,&1\leq j\leq M_\beta\\
\end{array}
\end{equation}
caring for the PDE in the domain and for the boundary values.
They are finite selections from the obvious infinite
sets of functionals that define the true solution analytically.
If the analytic problem is well-posed and if the function recovery from
the above data is carried out with enough oversampling,
this technique produces accurate and convergent
approximations to the true solution
of the PDE problem \cite{schaback:2016-4}. This reference
also fits algorithms that solve
problems in weak form into this framework,
including the Meshless Local Petrov Galerkin approach
by S.N. Atluri and T.-L. Zhu 
\cite{atluri-zhu:1998-2,atluri-zhu:1998-1} and Generalized Finite Element
Methods, see the
survey by I. Babu\v{s}ka et.al. \cite{babuska-et-al:2003-1}.
The general practical observation is that going for more accuracy
causes more instabilities, in a way that will be clarified here. 

Also, {\em evaluation} of functions is the
application of a functional $\mu\in U^*$ to some function $f$.
In particular, evaluation of a
multivariate derivative $D^\alpha$ at a point $x$ is the
application of the functional 
$\delta_x(D^\alpha f)=f^{(\alpha)}(x)$ in case
that $\delta_x D^\alpha$ is continuous on $U$.
If point evaluation is
not defined, as in $L_2$ spaces, but if local integration is feasible,
one can evaluate 
local integral means, as substitutes for
point evaluation. This is the standard way to handle
problems in weak form in the references cited above.

Summarizing, everything boils down
to a matter of functionals. What can we say about $\mu(f)$ if we know all
$\lambda(f)$ for all
$\lambda\in \Lambda$? In particular, what can we say about $f(x)$ when we know
plenty of data $f(x_j)$? Note that this problem is {\em  regression}
in a probabilistic context, and it arises in Machine Learning on a
large scale, with Big Data given in high-dimensional spaces.
\section{Recovery of Functions}\label{SecRoF}
We now postulate that we
can write the recovery of functions $f$ from their data
$\Lambda(f)=(\lambda_1(f),\ldots,\lambda_M(f))^T\in \mathbb{R}^M$
as a linear {\em recovery map} 
\begin{equation}\label{eqalm}
f \mapsto R_{a_\Lambda}(f):=a^T_\Lambda\Lambda(f)=
\displaystyle{\sum_{\lambda_j\in \Lambda}a_{\lambda_j} \lambda_j(f)   }
\hbox{ for all } f\in U
\end{equation}
such that the span of the elements $a_{\lambda_j}$
of the vector $a_\Lambda\in U^M$ defines a {\em trial subspace}
of functions in $U$. To avoid certain complications,
the map $f\to \Lambda(f)\in \mathbb{R}^M$ is assumed to be surjective.

We call
the recovery process {\em interpolatory} if
the recovery preserves the data, i.e.
$$
\Lambda(R_{a_\Lambda}(f))=\Lambda(a_\Lambda^T\Lambda(f))=\Lambda(f) \hbox{ for all } f\in U,
$$
and then $\lambda_k(a_{\lambda_j})=\delta_{jk},\,1\leq j,k\leq M$ holds
and the $a_{\lambda_j}$ form a {\em Lagrange basis} with Kronecker data.
We shall use the notation $u_{\mu,\Lambda}$ for a {\em Lagrangian}
that satisfies $\Lambda(u_{\mu,\Lambda})=0,\;\mu(u_{\mu,\Lambda})=1$,
and then $a_{\lambda_j}=u_{\lambda_j,\Lambda\setminus\{\lambda_j\}}$ holds in this
notation.

Lagrangians will not exist for general recoveries. But the $a_{\lambda_j}$
may be called
 {\em pseudo-Lagrangians}
 because they produce the recovery like Lagrangians, but
 without exact reproduction of the data.

Evaluation of the recovery via a
functional $\mu$ now is
$$
\mu(a^T_\Lambda\Lambda(f))=a^T_\Lambda(\mu)\Lambda(f)=
\displaystyle{\sum_{\lambda_j\in \Lambda}\mu(a_{\lambda_j}) \lambda_j(f)   }
$$
by defining a vector $a^T_\Lambda(\mu):=\mu(a^T_\Lambda)$
that is a bilinear form in $\Lambda$ and $\mu$.

One may restrict these maps to sums over neighbours $\lambda$ of $\mu$,
to get more {\em locality}, and this is what generalized
{\em Moving Least Squares}
\cite{farwig:1986-1,levin:1998-1,wendland:2000-1, armentano:2001-1}
or Finite Elements do.
But if theoretically
done for all $\mu\in U^*$, this still fits into the above framework.
As a prominent example, {\em Barycentric formulas} by J.P. Berrut and L.N. Trefethen
\cite{berrut-trefethen:2004-1} change the way
the above formula is calculated, with a significant gain in
numerical stability. 

For recovery of a single value  $\mu(f)$ from single values $\lambda_j(f)$
one can construct a vector of single values $a_{\lambda_j}(\mu)$ such that
\begin{equation}\label{eqmuff}
\mu(f)\approx
\displaystyle{\sum_{\lambda_j\in \Lambda}a_{\lambda_j}(\mu) \lambda_j(f)   }
\end{equation}
without necessarily calculating the $a_{\lambda_j}$ as functions and taking
values $\mu(a_{\lambda_j})=a_{\lambda_j}(\mu)$ afterwards.
In meshless methods (see the early survey by
T. Belytschko et.al. \cite{belytschko-et-al:1996-1}),
the functions $a_{\lambda_j}$ are called {\em shape functions}. In the standard
approach, they are calculated in many points, and if derivatives are
needed for dealing with PDEs, these are taken afterwards or obtained by taking
derivatives of the local construction process.
In contrast to this, {\em Direct Moving Least Squares}
by D. Mirzaei et.al. \cite{mirzaei-et-al:2012-1},
\cite{mirzaei-schaback:2013-1} use (\ref{eqmuff}) for derivative
functionals $\mu$ without the detour via shape functions. 

This presentation looks unduly abstract, but it isn't. It considers
recovery without any fixed assumptions about how functions are represented,
how norms of errors and functions are defined, and how bases are chosen.
Therefore it allows to compare
actual numerical strategies on a higher level. It goes back to the input data
and considers the output data, as functionals,
the actual determination of the recovery map
being in a black box.
The final goal in this paper is to see whether
going for a small error implies
some sort of instability whatsoever, and this may be independent
of what happens in the black box. This is why we consider
errors and stability in section \ref{SecEaS}
after we present some examples.
\section{Special Recovery Strategies}\label{SecSRS}
When avoiding full functions, the recovery of a value $\mu(f)$
from given values $\Lambda(f)$ 
is trivial  if $\mu =\lambda$ for
some $\lambda\in\Lambda$. In more generality,
one would pick the functional $\lambda\in \Lambda$ that is
``closest'' to $\mu$, and then take $\lambda(f)$ as an approximation of
$\mu(f)$. This is the {\em nearest neighbour} strategy, but it needs 
distances between functionals, and requires to find the
closest neighbour. Cases involving point geometry
like nearest neighbours or triangulations will be covered
by the theory developed here, but we do not include examples.

If a norm on $U^*$
is available, one can
consider the approximation problem to minimize
$$
\displaystyle{\left\|\mu
  -\displaystyle{\sum_{\lambda\in \Lambda}a_\lambda \lambda   }
  \right\|_{U^*}   } 
$$
over all coefficients $a_\lambda$, denote a solution by $a^*_\lambda(\mu)$
and to approximate $\mu$  by
$$
\mu_\Lambda^*:=\displaystyle{\sum_{\lambda\in \Lambda}a^*_\lambda(\mu) \lambda   }.
$$
This avoids functions as well, but it requires
norms in the dual space that users can work with.

Special cases are
Reproducing Kernel Hilbert spaces. They have a kernel
$K\;:\;\Omega\times\Omega$ on an abstract set $\Omega$
and define an inner product
$$
(\lambda,\mu)_{U^*}:=\lambda^x \mu^y K(x,y)
$$
where the application on $x$ arises as a superscript. Furthermore, each
functional $\lambda$ defines a function
$$
f_\lambda(x):=\lambda^yK(y,x) \hbox{ for all } x\in\Omega
$$
and these functions have the inner product
$$
(f_\lambda, f_\mu)_U=(\lambda,\mu)_{U^*}=\lambda^x \mu^y K(x,y)
$$
making $f_\lambda$ a Riesz representer of $\lambda$.
It is then easy to prove that an optimal recovery consists of the vector
$a_\lambda^*(\mu)$ that solves the system
$$
\left(
\begin{array}{cccc}
  (\lambda_1,\lambda_1)_{U^*} &(\lambda_1,\lambda_2)_{U^*} &\ldots
  & (\lambda_1,\lambda_M)_{U^*} \\
  (\lambda_2,\lambda_1)_{U^*} &(\lambda_2,\lambda_2)_{U^*} &\ldots
  & (\lambda_2,\lambda_M)_{U^*} \\
  \vdots & \vdots &\ddots& \vdots\\
  (\lambda_M,\lambda_1)_{U^*} &(\lambda_M,\lambda_2)_{U^*} &\ldots
  & (\lambda_M,\lambda_M)_{U^*} \\  
\end{array}
\right)
\left(
\begin{array}{c}
a^*_{\lambda_1}(\mu) \\
a^*_{\lambda_2}(\mu) \\
\vdots\\
a^*_{\lambda_M}(\mu) \\
\end{array}
\right)
=
\left(
\begin{array}{c}
({\lambda_1},\mu)_{U^*} \\
({\lambda_2},\mu)_{U^*} \\
\vdots\\
({\lambda_M},\mu)_{U^*} \\
\end{array}
\right)
$$
with a {\em kernel matrix} that usually is positive definite.

This looks theoretical again, but it applies to Sobolev spaces,
having Whittle-Mat\'ern kernels, and therefore it is useful
for solving PDE problems by recovery of functions from PDE data.
This recovery strategy has various optimality properties
\cite{schaback:2015-3} that we skip over
here. See details on kernel-based methods in books by M.D. Buhmann
\cite{buhmann:2003-1}, H. Wendland \cite{wendland:2005-1},
and G. Fasshauer/M. McCourt \cite{fasshauer-mccourt:2015-1}.

It also applies to Machine Learning
\cite{schaback-wendland:2006-1}. On a general set $\Omega$ one has
{\em feature maps} $\varphi_n$ that map the abstract objects $x$
to a real value like cost or weight or area. The kernel then is
$$
K(x,y)=\displaystyle{\sum_n \rho_n \varphi_n(x)\varphi_n(y)}
$$
with positive weights $\rho_n$ and the inner product
$$
(\lambda,\mu):=\displaystyle{\sum_n
  \dfrac{\lambda^x\varphi_n(x)\mu^y\varphi_n(y)}{\rho_n}   } 
$$
lets the above machinery work for regression, but details
are omitted. Combining the cases above, Machine Learning
can ``learn'' the solution of a PDE using this framework.
\section{Dual Trade-off Principles}\label{SecDTOP}
Throughout we shall assume that norms in $U$ and $U^*$
are defined and connected via the suprema
\begin{equation}\label{eq2xsup}
\displaystyle{  \|f\|_U= \sup_{0\neq \mu\in U^*}\frac{\mu(f)}{\|\mu\|_{U^*}}
  ,\;\;
 \|\mu\|_{U^*}= \sup_{0\neq f\in U}\frac{\mu(f)}{\|f\|_{U}}.
}
\end{equation}
Take a functional $\mu\in U^*$ and imagine that it is evaluating 
an error of a recovery process. Then
$$
1\leq \|\mu\|_{U^*}\cdot \|f_\mu\|_U
$$
holds for all functions $f_\mu\in U$ with $\mu(f_\mu)=1$. If the error
$\|\mu\|_{U^*}$ of the recovery process is small, the norms of
the functions $f_\mu$ must be large. We shall later
interpret this as an instability of the evaluation of
the recovery operator.

Of course, there also is a dual version
$$
1\leq \|f\|_{U}\cdot \|\mu_f\|_{U^*}
$$
for all functions $f \in U$ and all functionals $\mu_f\in U^*$
with $\mu_f(f)=1$. In Hilbert spaces, one can minimize the second factors
under the given constraint, and the minimum is realized by Riesz
representers. 

Note that the above inequalities turn into equalities if the suprema in
(\ref{eq2xsup}) are attained for the functions or functionals
in the second factors. 
Details and applications will follow below.

It is essential that the two norms in the above inequalities
are dual to each other. If $\|.\|_U$ allows and penalizes high derivatives,
the functionals in $U^*$ will allow high derivatives as well.
Users might want the error factor and the stability factor to use non-dual
norms, but this is a quite different story. 

These trade-off principles differ from certain
standard algebraic ones like $1\leq \|A\|\|A^{-1}\|$ for
square nonsingular matrices $A$, or
\begin{equation}\label{eqxy2}
\lvert(x,y)_2\rvert\leq \|x\|_2\|y\|_2
\end{equation}
for vectors. If generalized to
variances and a covariance or commutator, the latter case
is behind the Heisenberg Uncertainty Principle after a few steps of
generalization. In contrast to this, 
dual norms come into play here, and the trade-off principles will 
hold for all choices of norms.
\section{Error and Stability}\label{SecEaS}
We assume to have a finite set $\Lambda\subset U^*$ of functionals
to recover functions $f\in U$ from their data $\Lambda(f)$
via a recovery (\ref{eqalm}), and
we evaluate the result by applying a functional $\mu\notin \Lambda$ to $f$. 
\begin{definition}\label{DefPFgen}
The norm
$$
P_{a_\Lambda}(\mu):=\left\|\mu-\mu(R_{a_\Lambda})\Lambda \right\|_{U^*}
$$
is called the {\em Generalized Power Function}.
\end{definition}
It leads to an
error bound
\begin{equation}\label{eqmTmf}
\begin{array}{rcl}
\lvert \mu(f)-\mu(R_{a_\Lambda})(f)\rvert
&\leq&
P_{a_\Lambda}(\mu)\|f\|_U \hbox{ for all } f\in U,\;\mu\in U^*
\end{array} 
\end{equation}
and this is why we use it to deal with the recovery error.
\begin{definition}\label{DefBFgen}
A {\em bump function} $f_{\mu,\Lambda}\in U$ satisfies 
$\mu(f_{\mu,\Lambda})=1$ and $\Lambda(f_{\mu,\Lambda})=0$.
\end{definition}
\begin{theorem}\label{TheUP}
  For any any functional $\mu\in U^*$
  that has a bump function $f_{\mu,\Lambda}$,  
  the trade-off principle
\begin{equation}\label{eqPLmfU}
1\leq P_{a_\Lambda}(\mu)\|f_{\mu,\Lambda}\|_U
\end{equation}
holds, and
\begin{equation}\label{eqiPif}
\displaystyle{\frac{1}{\inf_{a_\lambda}P_{a_\Lambda}(\mu)}
\leq \inf_{\mu(f)=1,\Lambda(f)=0}\|f\|_U} 
\end{equation}
relates the best possible recovery to the best possible
bump function.
\end{theorem}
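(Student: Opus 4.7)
The plan is to attack \eqref{eqPLmfU} first by a single, clean dual-pairing argument, then to deduce \eqref{eqiPif} by taking infima. Everything should fall out of the two defining properties of the bump function, the definition of $P_{a_\Lambda}(\mu)$, and the duality relation \eqref{eq2xsup}.

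For \eqref{eqPLmfU}, I would begin by rewriting the generalized power function explicitly as the norm of the error functional
\[
\nu_{a_\Lambda,\mu}\;:=\;\mu-\mu(R_{a_\Lambda})\Lambda
   \;=\;\mu-\sum_{\lambda_j\in\Lambda}\mu(a_{\lambda_j})\,\lambda_j\;\in\;U^{*}.
\]
Applying this functional to the bump function $f_{\mu,\Lambda}$ and using $\mu(f_{\mu,\Lambda})=1$ together with $\lambda_j(f_{\mu,\Lambda})=0$ for every $\lambda_j\in\Lambda$ collapses the sum and yields $\nu_{a_\Lambda,\mu}(f_{\mu,\Lambda})=1$. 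Inserting this particular pairing into the dual-norm bound $|\nu(f)|\leq \|\nu\|_{U^{*}}\|f\|_U$ (one of the two suprema in \eqref{eq2xsup}) produces $1\leq P_{a_\Lambda}(\mu)\|f_{\mu,\Lambda}\|_U$, which is \eqref{eqPLmfU}. Note that this step uses neither interpolation nor optimality of $a_\Lambda$; it works for any recovery coefficients and any admissible bump function, which is exactly why the resulting principle is a genuine trade-off.

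For \eqref{eqiPif}, I would observe that the inequality just proved holds uniformly in both arguments. Fixing any bump $f_{\mu,\Lambda}$ and taking the infimum over $a_\Lambda$ gives $1\leq \bigl(\inf_{a_\Lambda} P_{a_\Lambda}(\mu)\bigr)\|f_{\mu,\Lambda}\|_U$, hence
\[
\|f_{\mu,\Lambda}\|_U \;\geq\; \frac{1}{\inf_{a_\Lambda} P_{a_\Lambda}(\mu)}.
\]
Since the right-hand side is independent of the choice of bump, taking the infimum over all $f$ with $\mu(f)=1$ and $\Lambda(f)=0$ preserves the inequality and yields \eqref{eqiPif}. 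Some care is needed if $\inf_{a_\Lambda}P_{a_\Lambda}(\mu)=0$: then the claim would be that no bump function exists (the right-hand infimum is $+\infty$ by convention), which is consistent with the hypothesis that a bump function is assumed to exist in part one.

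The only mildly nontrivial point, and what I would flag as the main place to be careful, is the interpretation of $\mu(R_{a_\Lambda})\Lambda$ as the functional $\sum_j \mu(a_{\lambda_j})\lambda_j\in U^{*}$; once that identification is made, the bump-function properties annihilate every term of the sum in one stroke and the pairing gives exactly $1$. Everything else is bookkeeping around \eqref{eq2xsup} and infimum manipulation; no approximation, completeness, or Hilbert-space structure is invoked, which matches the author's remark that the trade-off principle holds for arbitrary choices of norms.
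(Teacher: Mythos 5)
Your argument is correct and is essentially the paper's own proof: pairing the error functional $\mu-\mu(R_{a_\Lambda})\Lambda$ with the bump function is exactly what the author means by ``insert a bump function into (\ref{eqmTmf})'', and the two defining properties of $f_{\mu,\Lambda}$ collapse the pairing to $1$ in the same one-stroke way. Your explicit infimum bookkeeping for (\ref{eqiPif}) and the remark about the degenerate case $\inf_{a_\Lambda}P_{a_\Lambda}(\mu)=0$ are harmless elaborations of what the paper leaves implicit.
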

\begin{proof}
  Just insert a bump function into (\ref{eqmTmf}).
\end{proof}
{\bf Remark}: Power Functions, bump functions, and Lagrangians
are independent of bases. There is no escape from the Trade-off Principle by any
change of basis, as long as the recovery map or the space $U$ are kept fixed.

{\bf Remark}: Furthermore, recoveries, bump functions, and Lagrangians can be 
defined without using norms or spaces. These come up when
going over to a Power Function and a norm of a bump function.
Therefore Theorem \ref{TheUP} does not only cover all
possible recoveries, but also all ways to handle
errors and evaluation instability for these
by defining norms afterwards. Furthermore,
(\ref{eqPLmfU}) is {\em local} in the sense that it
holds for each specific $\mu$. The right-hand side will vary
considerably with $\mu$, up to the limit $1\leq 0\cdot\infty$ in the excluded
case $\mu\in \Lambda$. 

While (\ref{eqPLmfU}) is an {\em add-one-in} version,
a special {\em leave-one-out} version is
\begin{equation}\label{eqPLlulf}
1\leq P_{a_{\Lambda\setminus\{\lambda\}}}(\lambda)\|f_{\lambda,\Lambda\setminus\{\lambda\}}\|_U
\end{equation}
if a bump function $f_{\lambda,\Lambda\setminus\{\lambda\}}$ is available.
And if
the recovery is interpolatory, using a Lagrangian
$u_{\lambda,\Lambda\setminus\{\lambda\}}$  we get
\begin{equation}\label{eqPLlulL}
1\leq P_{\Lambda\setminus\{\lambda\}}(\lambda)\|u_{\lambda,\Lambda\setminus\{\lambda\}}\|_U.
\end{equation}
When using the leave-one-out version, it is a pitfall to assume
that the recovery $a_{\Lambda\setminus \{\lambda\}}$ arises from
deleting the component $a_\lambda$ from $a_\Lambda$. The other components
will still depend on all functionals in $\Lambda$.

We now have to show that the second factor governs the stability of evaluation
of the recovery. 
The norm of the interpolation as a map from data to functions in $U$
is blown up
for large $U$-norms of Lagrangians due to
$$
\begin{array}{rcl}
  \max_{\lambda\in \Lambda}\|u_{\lambda,\Lambda\setminus\{\lambda\}}\|_U
  &\leq&
  \displaystyle{ \sup_{0\neq f\in U}\dfrac{ \|R_{a_\Lambda}(f)\|_{U}}{\|\Lambda(f)\|}}\\
&=& \|R_{a_\Lambda}\|_{\Lambda(U),U}\\
&\leq &
  \displaystyle{\sup_{0\neq f\in U}\dfrac{ \|\sum_{\lambda\in \Lambda}\lambda(f)
    u_{\lambda,\Lambda\setminus\{\lambda\}}  \|_{U}}{\|\Lambda(f)\|}}\\
&\leq &
  \left(\max_{\lambda\in \Lambda}\|u_{\lambda,\Lambda\setminus\{\lambda\}}\|_U\right)
  \left(\sum_{\lambda\in\Lambda}\|\lambda\|_{U^*}\right).
\end{array} 
$$
Summing up (\ref{eqPLlulL}) in the interpolatory case,
we get a trade-off principle
$$
\begin{array}{rcl}
  \lvert \Lambda\rvert 
  &\leq &
  \| P_{\Lambda\setminus\{\lambda\}}(\lambda)  \|_{p,\mathbb{R}^{\lvert \Lambda \rvert}}
  \| \|u_{\lambda,\Lambda\setminus\{\lambda\}}\|_U  \|_{q,\mathbb{R}^{\lvert \Lambda \rvert}}\\
\end{array}
$$
that lets the final factor grow when the error is small. Here, the
norms in $\mathbb{R}^{\lvert \Lambda \rvert}$ are running over the $\lambda\in \Lambda$,
and we allow $1/p+1/q=1$.

Assume that the data $\lambda(f)$ for a single $\lambda$
and a fixed function $f$ carries an absolute error $\epsilon$. Then the results
of (\ref{eqalm}) will differ by $\epsilon a_\lambda(\mu)=\epsilon \mu(a_\lambda)$
showing that evaluation and its expectable roundoff blows up
with increasing pseudo-Lagrangians $a_\lambda$. 
In the interpolatory case,
$a_\lambda(\mu)=\mu(u_{\lambda,\Lambda\setminus\{\lambda\}})$ implies that 
the input errors propagate by the Lagrangians into the result,
and (\ref{eqPLlulL}) is a lower bound of the product between error and evaluation
stability. 
This, again, is why Lagrangians are closely connected to stability
of the evaluation of an interpolant.
Even if Lagrangians are never calculated, they are behind the scene
in any interpolatory recovery, if the final evaluation is a
weighted sum (\ref{eqalm}) over the $\lambda(f)$ when done exactly.
This holds because the
$a_\lambda(\mu)$ will always be values of Lagrangians,
even if the latter are
avoided by tricky numerical detours.
The stability of methods
for calculating Lagrangians is ignored here.

So far it is not clear what bump functions have to do
with the stability of evaluation. In general,
$$
\|R_{a_\Lambda}(f_{\lambda,\Lambda\setminus\{\lambda\}})\|_U=\|a_\lambda\|_U
\leq \|R_{a_\Lambda}\|\|f_{\lambda,\Lambda\setminus\{\lambda\}}\|_U
$$
shows that control over norms of bump functions implies control
over the $\|a_\lambda\|_U$, and we saw above that these blow up
absolute errors in the input data.
If the recovery operator $R_{a_\Lambda}$ 
is used without changes to evaluate the recovery result,
the evaluation is bounded above by
\begin{equation}\label{eqTLmfS}
\begin{array}{rcl}
  \lvert \mu(R_{a_\Lambda})(f)\rvert
  &=&
\displaystyle{\left\lvert \sum_{\lambda\in
    \Lambda}\mu(a_{\lambda})\lambda(f) \right\rvert  } \\
&\leq &
\|\lambda(f)\|_{p,\mathbb{R}^{\lvert \Lambda \rvert}} \|\mu(a_\lambda)    \|_{q,\mathbb{R}^{\lvert \Lambda \rvert}}\\
&\leq &
\|\lambda(f)\|_{p,\mathbb{R}^{\lvert \Lambda \rvert}} \;\|\mu\|_{U^*}
\;\|\|a_\lambda\|_U     \|_{q,\mathbb{R}^{\lvert \Lambda \rvert}}\\
\end{array}
\end{equation}
with $1/p+1/q=1$ and where the norms on $\mathbb{R}^{\lvert \Lambda \rvert}$ run over the $\lambda$
values.
The bound factors into the linear influence of $f$ and $\mu$
and keeps the final factor as something like a Lebesgue constant.

\noindent This implies, in a somewhat sloppy formulation,
the trade-off principle
\begin{quote}
Small errors imply large evaluation instabilities
\end{quote}
that tacitly assumes that errors are measured via norms in $U^*$
while evaluation instabilities are measured via
norms in $U$, the two being dual
in the sense of section \ref{SecDTOP}.

Dealing with a non-dual situation, in particular with a weaker notion
of evaluation instability, requires much more machinery. 
A typical case is $L_\infty$ evaluation stability governed by
$L_\infty$ norms of Lagrangians 
for point evaluation data $\lambda_j(f)=f(x_j)$. 
This is the standard path along Lebesgue functions and Lebesgue constants.
For univariate polynomial interpolation,
this approach reveals that equidistant points
have an exponential $L_\infty$ instability, while Chebyshev-distributed points
only have a logarithmic $L_\infty$ instability. For
kernel-based interpolation of function values,
\cite{demarchi-schaback:2010-1} proved uniform boundedness of
Lagrangians if point sets are asymptotically uniformly distributed
and if kernels have finite smoothness. Since errors can differ
between such kernels while Lagrangians are always uniformly bounded,
there is no strict Trade-off Principle under these circumstances. 

{\bf Remark}: Certain numerical techniques put a map $C$ and
its inverse into (\ref{eqTLmfS}) like
$$
  R_{a_\Lambda}(\mu)(f)
  =
\displaystyle{ \left(\sum_{\lambda\in
    \Lambda}\mu(a_{\lambda})\sum_{\tau}C^{-1}_{\lambda,\tau}
  \right)\left(
  \sum_{\lambda}C_{\tau,\lambda}\lambda(f) \right) }.
$$
The classical example is standard univariate polynomial interpolation using a transition to
divided differences on the functional side and to the Newton basis
on the function side. The stability properties of the recovery
map as a whole
are not changed by that. Possible instabilities are just distributed
over both factors. These effects are ignored here.

\section{Optimal Recovery of Functions}\label{SecORoF}
We shall see interpolatory cases
where the Lagrangian satisfies (\ref{eqPLmfU})
and (\ref{eqPLlulL})
with equality.
Then, by (\ref{eqiPif}),
the norms of the Lagrangians are minimal under the
norms of all bump functions, and the recovery process
has minimal error.
Under certain conditions satisfied for splines and
kernel-based interpolation, this holds systematically:
\begin{theorem}\label{TheExTOP}
  Assume that an interpolatory recovery process $R_{a_\Lambda}$ satisfies
  minimum-norm and bounded-error properties, i.e.
  $$
  \|R_{a_\Lambda}(f)\|_U=\inf_{g\in U, \Lambda(f)=\Lambda(g)}\|g\|_U \leq
 \|f\|_U  \hbox{ and }  \|f-R_{a_\Lambda}(f)\|_U\leq \|f\|_U
  \hbox{ for all } f\in U.
  $$
  Then (\ref{eqPLmfU}) and (\ref{eqiPif}) hold with equality.
  The Lagrangian is
  the minimum-norm bump function, and the recovery process has a minimal Power
  Function. 
\end{theorem}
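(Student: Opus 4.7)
The plan is to turn the two structural hypotheses — minimum-norm and bounded-error — into matching lower and upper bounds on the Power Function, and then read off all three claims.

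The starting observation is that the minimum-norm assumption forces $R_{a_\Lambda}(f)=0$ whenever $\Lambda(f)=0$, because then the infimum in the minimum-norm characterisation is attained at $g=0$. In particular, the recovery annihilates every bump function $f_{\mu,\Lambda}$, so $\mu(f_{\mu,\Lambda})-\mu(R_{a_\Lambda}(f_{\mu,\Lambda}))=1$. Plugging this into the general error bound (\ref{eqmTmf}) yields the lower estimate $1\le P_{a_\Lambda}(\mu)\,\|f_{\mu,\Lambda}\|_U$, which is (\ref{eqPLmfU}), and taking the infimum over bump functions reproduces one direction of (\ref{eqiPif}).

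For the matching upper bound I would expand
$$
P_{a_\Lambda}(\mu)=\sup_{\|f\|_U\le 1}\bigl|\mu(f)-\mu(R_{a_\Lambda}(f))\bigr|
$$
and note that for every admissible $f$ the difference $g:=f-R_{a_\Lambda}(f)$ satisfies $\Lambda(g)=0$ by the interpolatory property and $\|g\|_U\le\|f\|_U\le 1$ by the bounded-error hypothesis. Each value under the supremum is therefore bounded by $\sup_{\Lambda(g)=0,\,\|g\|_U\le 1}|\mu(g)|$, which a simple scaling argument identifies with $\bigl(\inf_{f_{\mu,\Lambda}}\|f_{\mu,\Lambda}\|_U\bigr)^{-1}$. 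Combined with the lower bound from the previous paragraph this forces
$$
P_{a_\Lambda}(\mu)\cdot\inf_{f_{\mu,\Lambda}}\|f_{\mu,\Lambda}\|_U=1,
$$
which is (\ref{eqiPif}) with equality and simultaneously shows that $a_\Lambda$ minimises the Power Function, since Theorem \ref{TheUP} precludes any smaller value among all choices.

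To identify the infimising bump function as the Lagrangian $u_{\mu,\Lambda}$, I would apply the minimum-norm property once more, this time to the enlarged functional set $\Lambda\cup\{\mu\}$: any bump function $h$ is interpolatory data for that larger system, and the corresponding minimum-norm interpolant is still a bump function, so the Lagrangian realises the infimum. Equality in (\ref{eqPLmfU}) then follows by choosing $f_{\mu,\Lambda}=u_{\mu,\Lambda}$. The main technical obstacle is exactly this last step: it tacitly uses that the minimum-norm hypothesis transfers to augmented functional sets, equivalently that the infimum over bump functions is actually attained. In the concrete spline and kernel settings of Section \ref{SecORoF} this transfer is automatic, but in a general normed space it should either be stated as an additional hypothesis or replaced by a compactness argument in $U$.
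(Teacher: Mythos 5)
Your proof is correct and follows essentially the same route as the paper's: both reduce $P_{a_\Lambda}(\mu)$ to $\sup_{\Lambda(g)=0,\,\|g\|_U\le 1}\lvert\mu(g)\rvert$ via the bounded-error and interpolatory properties, identify this supremum by scaling with the reciprocal of the minimal bump-function norm, and invoke the minimum-norm property only to name the Lagrangian as the minimizer. Your closing remark that this last step tacitly assumes the minimum-norm property transfers to the augmented set $\Lambda\cup\{\mu\}$ (equivalently, that the infimum over bump functions is attained) is a fair observation; the paper's own proof glosses over exactly this point, which is automatic in the Hilbert-space settings it targets.
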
 
\begin{proof}
We reformulate the Power Function via
$$
\begin{array}{rcl}
  P_{a_\Lambda}(\mu)
  &=&
  \displaystyle{ \sup_{\|f\|_U\leq 1}
\lvert \mu(f)-\mu(R_{a_\Lambda}(f))\rvert}\\
  &=&
  \displaystyle{ \sup_{\|f-R_{a_\Lambda}(f)\|_U\leq 1}
\lvert \mu(f)-\mu(R_{a_\Lambda}(f))\rvert}\\
\end{array} 
$$
where  $\geq$ follows from replacing $f$ by $f-R_{a_\Lambda}(f)$, and $\leq$
follows from the minimum error property, because the set of
$\|f-R_{a_\Lambda}(f)\|_U\leq 1$ contains the set of $\|f\|_U\leq 1$.
Then we go on by
$$
\begin{array}{rcl}
  P_{a_\Lambda}(\mu)
&=&
  \displaystyle{ \sup_{\|g\|_U\leq 1, \Lambda(g)=0}
\lvert\mu(g)\rvert}\\
\end{array}
$$
and note that $g$ van be written as $\mu(g)f_{\mu,\Lambda}$
for an arbitrary bump function $f_{\mu,\Lambda}$. Now
$$
\begin{array}{rcl}
  P_{a_\Lambda}(\mu)
&=&
  \displaystyle{ \sup_{\|f_{\mu,\Lambda}\|_U\leq 1/\lvert\mu(g)\rvert}\lvert\mu(g)\rvert}\\
  &=& \displaystyle{  \frac{1}{\min{\|f_{\mu,\Lambda}\|_U}}}\\
  &=& \displaystyle{ \frac{1}{\|u_{\mu,\Lambda}\|_U}}
\end{array}
$$
where the final line follows from the minimum norm property.
\end{proof}
\section{Examples of Interpolatory Recoveries}\label{SecExa}
This section illustrates how the Trade-off Principle works
under various circumstances. For the univariate cases in this section,
we treat interpolation in $[-1,+1]$ of values $f(x_j)$ of functions
$f$ on points $x_0,\ldots,x_n$
and evaluation at some point $x$. The functionals are $\delta$ functionals,
and everything can be expressed via the points. Other cases stay with
the original formulation via general functionals $\lambda_j$. 
\subsection{Connect-the-dots}\label{SecPWL}
The simplest univariate case is
{\em connect-the-dots} piecewise linear interpolation,
but it has no choice of a space $U$ yet.
The simplest is $U=C[-1,+1]$ under the sup norm,
the Lagrangians being hat functions, with constant extensions to
the boundary, if boundary points are not given.
Then there is no proper error bound, and (\ref{eqPLlulL})
consists of all ones.

If we keep the interpolation method as is,
we can go over to zero boundary values and 
$U=C_0^1[-1,+1]$ under the sup norm of the first derivative.
An add-one-in  Lagrangian $u_{x,x_k,x_{k+1}}$ based on three adjacent
increasingly ordered points $x_{k}<x<x_{k+1}$
will have the norm
$(\min(x_{k+1}-x,x-x_{k}))^{-1}$.
The add-one-in Power Function on some $x\in [x_k,x_{k+1}]$ is
$$
\begin{array}{rcl}
  P_{a_\Lambda}(x)
  &=&
  \sup_{\|f'\|_\infty\leq 1}\lvert f(x)-R_{a_\Lambda}(f)(x)\rvert\\
  &=&
  \sup_{\|f'\|_\infty\leq 1}
  \left\lvert \frac{x_{k+1}-x}{x_{k+1}-x_k}(f(x)-f(x_k))
  +\frac{x-x_k}{x_{k+1}-x_k}(f(x)-f(x_{k+1}))     \right\rvert\\
  &=&
  2\frac{(x_{k+1}-x)(x-x_k)}{x_{k+1}-x_k}
\end{array}
$$
and for the Trade-off Principle we get 
$$
\begin{array}{rcl}
 \frac{P_{a_\Lambda}(x)}{\|u_{x,x_k,x_{k+1}}\|_{C_0^1[-1,+1]}}=
 2\frac{(x_{k+1}-x)(x-x_k)}{(x_{k+1}-x_k)\min(x_{k+1}-x,x-x_{k})}
 =2\frac{\max(x_{k+1}-x,x-x_{k})}{x_{k+1}-x_k}
\end{array}
$$
being between 1 and 2.

If $U$ takes the $L_2$ norm of first derivatives,
we are in a standard spline
situation \cite{ahlberg-et-al:1967-1}
and Theorem \ref{TheExTOP} applies. This illustrates that
the Trade-off Principle works locally and for all possible norms
when the recovery problem is fixed to be connect-the-dots. 
\subsection{Taylor Data}\label{SecTD}
Here is a rather academic but mathematically interesting case.
Take a space $U$ of univariate
real-valued functions on $(-1,+1)$ that have complex extensions being
analytic in the unit disc, and consider {\em Taylor data}
functionals
$\lambda_j(f):=f^{(j)}(0)/j!$ for $j\geq 0$. Then write the functions
by their Taylor series
$$
f(z)=\displaystyle{\sum_{n\geq 0}\lambda_j(f)z^j   } 
$$
and define a norm in $U$ by
$$
\|f\|_U^2=\displaystyle{\sum_{n\geq 0}\frac{\lambda_j(f)^2(j!)^2}{\rho_j}   } 
$$
where the positive weights $\rho_j$ satisfy the constraint
\begin{equation}\label{eqTayCon}
\displaystyle{\sum_{n\geq 0}\frac{\rho_j}{(j!)^2}<\infty}. 
\end{equation}
This generates a Hilbert space of functions whose
reproduction formula is the
Taylor series, see \cite{zwicknagl-schaback:2013-1} for plenty
of examples, including Hardy and Bergman spaces. The interpolation here
is just a partial sum of the Taylor series, while it works by
kernel translates in \cite{zwicknagl-schaback:2013-1}.

Now the monomials $z^j$ are the Lagrangians for the $\lambda_j$,
with norms $\|z^j\|^2_U={(j!)^2}/{\rho_j}$. And Theorem
\ref{TheExTOP} holds because we just chop the Taylor series.
Consequently, inequalities (\ref{eqPLmfU}) and (\ref{eqPLlulL})
are satisfied by equality, and we also know that the Power Function
in the leave-last-out form is
$$
 P_{\Lambda_{k-1}}(\lambda_k)=\sqrt{\rho_k}/k!.
 $$
 We treat the add-one-in case in the next section.
 
  Note that all cases will behave like that if they take
  expansions into series of Lagrangians as their underlying space $U$,
  with weights for the expansion coefficients.
\subsection{Orthogonal Series}\label{SecOS}
Now assume that a space $U$ carries an inner product and allows an
orthonormal basis $u_0,u_1,\ldots$, while the data functionals
are $\lambda_j(u)=(u,u_j)_U$. Again, the Lagrangians are
the expansion basis, and we have Fourier series as a prominent example.
If only such functionals are considered,
this is a trivial case, because all Lagrangians and Power Functions
have norm one.

Now let $\mu$ be a different functional, and
we construct the norm-minimal
bump function of the form
$$
f_{\mu,\Lambda_n}=\displaystyle{\sum_{j>n}a_ju_j   }, \;a_j=(f_{\mu,\Lambda_n},u_j)_U.
$$
Under the constraint
$\mu(f_{\mu,\Lambda_n})=1$ we have to minimize
$$
\|f_{\mu,\Lambda_n}\|_U^2=\displaystyle{\sum_{j>n}a_j^2   }, 
$$
and by standard optimization arguments this results in 
$$
a_j= \dfrac{\mu(u_j)}{\sum_{k>n}\mu(u_k)^2},\;
\|u_{\mu,\Lambda_n}\|_U^2=\displaystyle{\left(\sum_{j>n}\mu(u_j)^2\right)^{-1}},
$$
yielding the byproduct
$$
P_{\Lambda_n}(\mu)^2=\displaystyle{ \sum_{j>n}\mu(u_j)^2}.
$$
\subsection{Splines}\label{SecSp}
These are cases where Theorem \ref{TheExTOP} applies,
if they are written
in their Hilbert space context. Power Functions can be calculated via
reciprocals of norms of Lagrangians. But the theory of this paper
allows plenty of nonstandard approaches to splines as well,
using different norms. 
\subsection{Polynomial Interpolation}\label{SecPI}
Here, the choice of the space $U$ needs special treatment, but we keep
the data being values at points $x_0,\ldots,x_n$ forming a set $X_n$ to enable
exact interpolation by polynomials of degree $n$ or order $n+1$.

The classical way to deal with this is to take $U=C^{n+1}[-1,+1]$
and to concentrate on the $(n+1)$-st derivative only, i.e.
taking the seminorm $\|f\|_U:=\|f^{(n+1)}\|_\infty$.
This brings us back to the introductory example in
Section \ref{SecIntro}. 
See how (\ref{eqPLmfUpoly}) works locally,
up to the limit $1=0\cdot\infty$ in case $x\in X_n$. Choosing other norms
will lead to different results. 
\subsection{Norms via Expansions}\label{SecNvE}
Our univariate model case here is dealing with functions
in {\tt chebfun} style (T. Driscoll et.al. \cite{driscoll-et-al:2014}),
where $U$ is a space of functions on $[-1,+1]$
having expansions
$$
f_a(x)=\displaystyle{\sum_{j=0}^\infty a_j T_j(x)  } 
$$
into Chebyshev polynomials, and where the norm takes nonnegative weights
$w_j$ 
of the coefficients, e.g.
$$
\|f_a\|_U^2=\displaystyle{\sum_{j=0}^\infty a^2_j w_j  }. 
$$
But it should be clear that one can use other expansions as well,
including multivariate cases.

This is a reproducing kernel
Hilbert space setting in disguise by
$$
(f_a,f_b)_U:=\displaystyle{\sum_{j=0}^\infty a_jb_j w_j  }
$$
and
$$
(\lambda,\mu)_{U^*}:=
\displaystyle{\sum_{j=0}^\infty \frac{\lambda(T_j)\mu(T_j)}{w_j}  }
$$
and the kernel is
$$
K(x,y):=\displaystyle{\sum_{j=0}^\infty \frac{T_j(x)T_j(y)}{w_j}  }.
$$
This holds in general, but for the Chebyshev case
this is a periodic setting in disguise, because of
$$
K(\cos \varphi,\cos \psi)
=\displaystyle{\sum_{j=0}^\infty \frac{\cos(j\varphi) \cos(j\psi)}{w_j}  }.
$$
If we would treat this like in kernel-based spaces,
interpolation would be done by linear combinations of
non-polynomial functions $K(x,x_k)$, and it would be
norm-minimal and error-minimal.
Taking $w_j=0$ for $j>n$ in the Chebyshev case, this falls back to
polynomial interpolation of degree $n$ using a basis of
$$
K(x,x_k):=\displaystyle{\sum_{j=0}^n \frac{T_j(x_k)}{w_j}T_j(x)  },
\;0\leq k\leq n.
$$
The Lagrangians and the Power Functions are invariant to basis changes,
and therefore the kernel-based viewpoint shows that Theorem
\ref{TheExTOP} holds. This opens an easy access to the Power Function via the
reciprocal of the $U$-norm of the Lagrangian.

In general, by solving 
$$
\delta_{ij}=\displaystyle{\sum_{k=0}^n a_{ik}\lambda_j(T_k)=\lambda_j(u_i)   },\;0\leq
i,j\leq n 
$$
one gets the expansion coefficients $a_{ik}$ of the Lagrangians $u_i$, and then
the reciprocal of
$$
\begin{array}{rcl}
\|u_i\|_U^2
&=&
\displaystyle{\sum_{j=0}^n a^2_{ij} w_j  } 
\end{array}
$$
gives the square of the
leave-one-out Power Function on the left-out point $x_i$.
To get a add-one-in Power function, start with $n-1$ points and add
another point $x$.

In the Chebyshev situation handling only point evaluations,
the Chebyshev-Van\-der\-mon\-de matrix $T_k(x_j)$ is particularly well-behaving
if the points are Chebyshev-distributed, as extrema of $T_n$ or zeros
of $T_{n+1}$.

But note that the above approach applies to all
expansion-based spaces where the generating functions $T_k$ 
are not the Lagrangians of the data functionals $\lambda_j$. The crucial matrix
has entries $\lambda_j(T_k)$ and is a generalized Vandermonde matrix
with possibly awful behaviour.

We want to check the add-one-in Power function
$$
\begin{array}{rcl}
  P_{a_\Lambda}(\mu)
  &=&
  \displaystyle{ \sup_{\|f\|_U\leq 1}
    \left\lvert\mu(f)-\sum_{j=0}^n\lambda_j(f)\mu(u_j)\right\rvert}\\
   &=&
  \displaystyle{ \sup_{\|f\|_U\leq 1}
    \left\lvert\sum_{k=0}^\infty a_k\left(\mu(T_k)
    -\sum_{j=0}^n\lambda_j(T_k)\mu(u_j)\right)\right\rvert}\\
   &=&
  \displaystyle{ \sup_{\|f\|_U\leq 1}
    \left\lvert\sum_{k=0}^\infty a_k\mu(\epsilon_k)\right\rvert}\\ 
\end{array}
$$
with
$$
\epsilon_k=\displaystyle{T_k
    -\sum_{j=0}^n\lambda_j(T_k)u_j},\;k\geq 0
$$
being the error on $T_k$.
We see that $\epsilon_k=0$ for $k\leq n$ and can proceed to optimize under the
constraint
$$
\|f\|_U^2=\sum_{k=0}^\infty a_k^2w_k\leq 1
$$
to get
$$
\begin{array}{rcl}
P^2_\Lambda(\mu)=\displaystyle{\sum_{k=n+1}^\infty \frac{\mu(\epsilon_k)^2}{w_k}
}\geq \dfrac{\mu(\epsilon_{n+1})^2}{w_{n+1}}. 
\end{array}
$$
The add-one-in Lagrangian $u_{\mu,\Lambda}$ arises as
$$
\epsilon_{n+1}=u_{\mu,\Lambda}\mu(\epsilon_{n+1})
$$
and if rewritten as
$$
\epsilon_{n+1}=\displaystyle{T_{n+1}
    -\sum_{k=0}^nT_k \underbrace{\sum_{j=0}^n\lambda_j(T_{n+1})a_{jk}}_{c_{k,n+1}}}
$$
the norm is
$$
\|\epsilon_{n+1}\|^2=w_{n+1}+\displaystyle{\sum_{k=0}^n c^2_{k,n+1}w_k   }
\geq w_{n+1}
$$
such that
$$
\|u_{\mu,\Lambda}\|^2=\dfrac{\|\epsilon_{n+1}\|^2}{\mu(\epsilon_{n+1})^2}
\geq \dfrac{w_{n+1}}{\mu(\epsilon_{n+1})^2}
$$
satisfies (\ref{eqPLmfU}), but not necessarily with equality, except when only
$w_{n+1}$ is nonzero.

If one uses 11 interpolation points and an additional point at -0.9056,
Figure \ref{FigBumLag} shows norm-minimal bump functions and Lagrangians.
The bump functions used Chebyshev polynomials up to order 121
to get leeway for norm minimization, and the weights
on the $T_j$ were $w_j=(j+1)^2$. The left plot is for Chebyshev points, the
right for equidistant points. The norms of Lagrangians versus bump functions
were 4.43 versus 2.43 for equidistant points, and 19.47 versus 3.3 
for Chebyshev points. The product of the Power Function with the norm of
the bump functions came out as 2.62 and 1.30 instead of one.
\begin{figure}[h]
\begin{center}
\includegraphics[height=4.0cm,width=5.5cm]{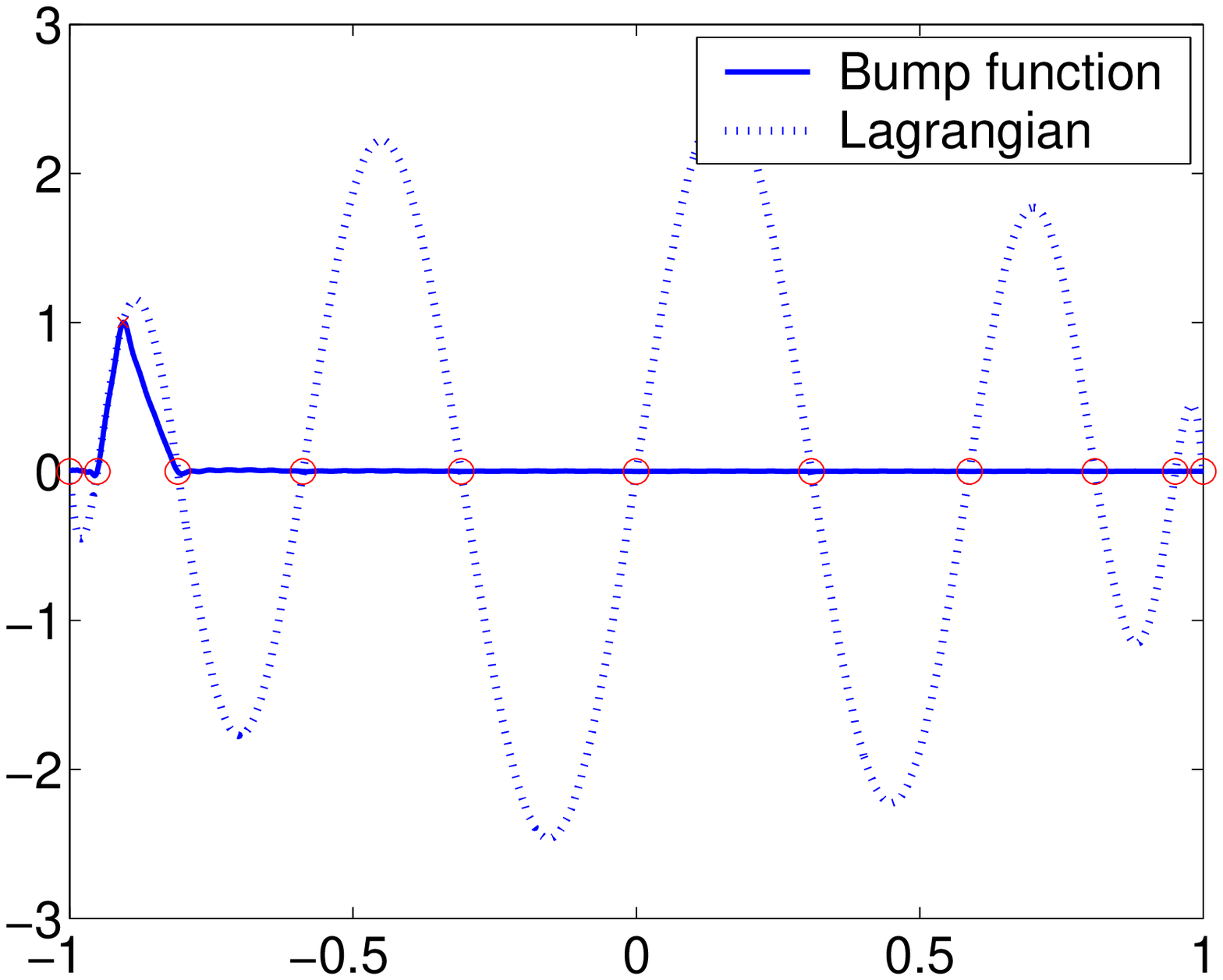} 
\includegraphics[height=4.0cm,width=5.5cm]{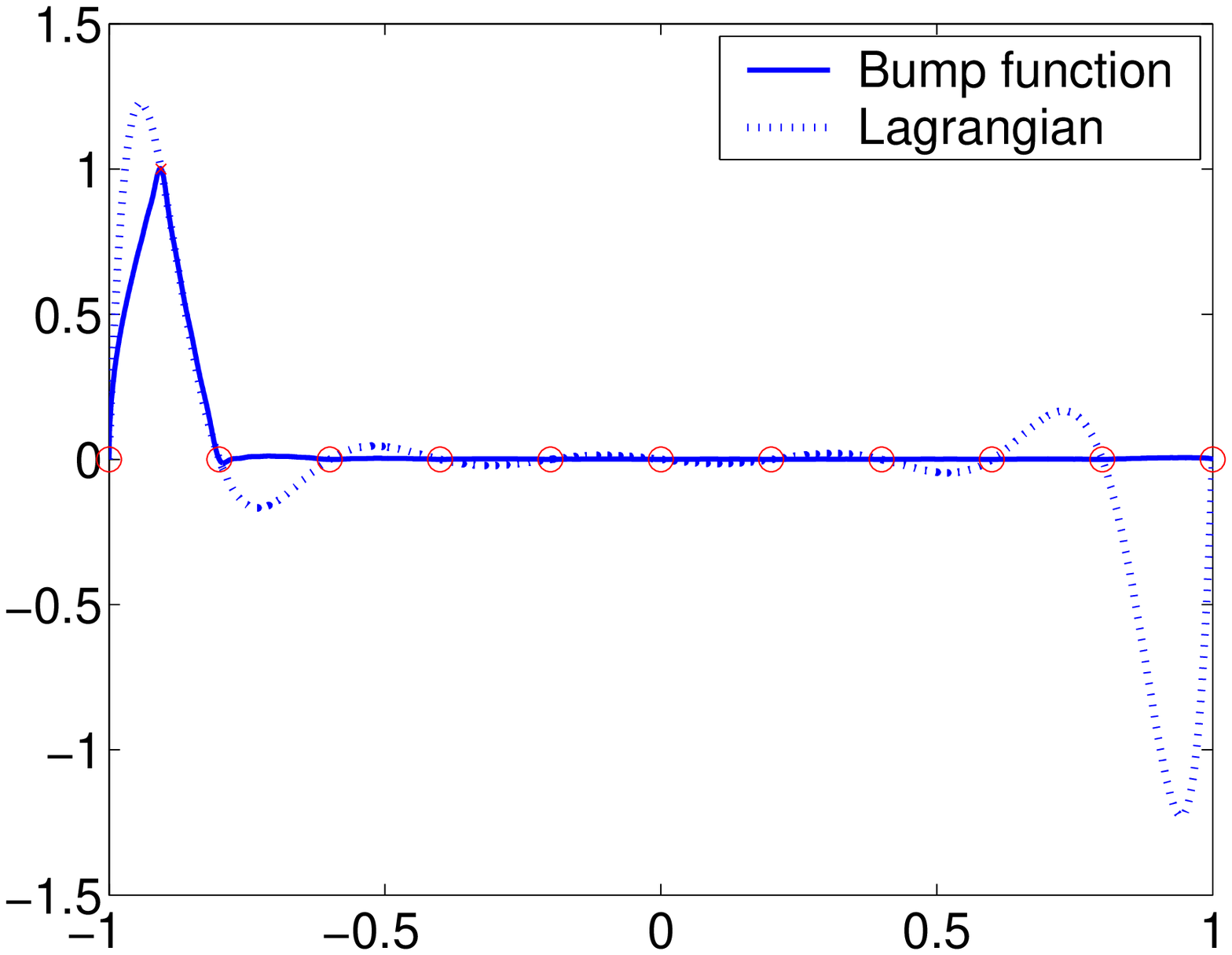} 
\end{center}
\caption{Bump functions and low-order Lagrangians for Chebyshev(left) and
equidistant points \label{FigBumLag}}
\end{figure}
\subsection{Kernel-Based Recovery Problems}\label{SecKBRP}
The goal of this section is to include the Trade-off Principle
of \cite{schaback:1995-1} as a special case,
though it looks different, considering
eigenvalues of kernel matrices there.

Assume a generalized interpolation using a set
$\Lambda:=\{\lambda_1,\ldots,\lambda_N\}$ of linearly independent
functionals and the trial space
$H_{\Lambda,K}:=\{\lambda^xK(x,\cdot)\;:\;\lambda\in \Lambda\}$
for a positive
definite kernel $K$ on a set $\Omega$.
The kernel matrix $A_{\Lambda,K}$ has entries
$$
\lambda_j^x \lambda_k^y K(x,y)=(\lambda_j,\lambda_k)_{H_K^*},\;1\leq j,k\leq N
$$
in the inner product of the
dual $H_K^*$ of the native space $H_K$ for $K$
and is positive definite. By standard arguments from Optimal Recovery in
Reproducing Kernel Hilbert Spaces, Theorem \ref{TheExTOP} holds,
and we have a Trade-off Principle in the form
(\ref{eqPLlulL}) for Lagrangians with equality.

But the result of \cite{schaback:1995-1} looks different.
To see the connection,
recall that the squared Power Function for the add-one-in
situation is the quadratic form
$$
\left(
\begin{array}{c}
  1\\
  -\mu(u_1)\\
  \vdots \\
 -\mu(u_N) 
\end{array}\right)^T
\left(\begin{array}{cccc}
(\mu,\mu) & (\mu,\lambda_1) & \ldots & (\mu,\lambda_N)\\ 
  (\lambda_1,\mu) & (\lambda_1,\lambda_1) & \ldots & (\lambda_1,\lambda_N)\\
  \vdots          &     \vdots           & \ddots &  \vdots\\  
(\lambda_N,\mu) & (\lambda_N,\lambda_1) & \ldots & (\lambda_N,\lambda_N)\\ 
\end{array}\right)
\left(
\begin{array}{c}
  1\\
  -\mu(u_1)\\
  \vdots \\
 -\mu(u_N) 
\end{array}\right)
$$
where the $u_j$ are the Lagrangians.
The proof of the trade-off principle in \cite{schaback:1995-1}
proceeds via the smallest
eigenvalue of the matrix and ignores norms of Lagrangians. Furthermore, it is just an
inequality in its original form, while Theorem \ref{TheExTOP}
yields an equation and is much more general. 

To see how \cite{schaback:1995-1}
could have proven
equality in (\ref{eqPLlulL}) more than 25 years earlier,
we look at the connection now.
This requires to identify the quadratic form with the reciprocal of 
$\|u_{\mu,\Lambda}\|_U^2$. Consider the function $u$ based on the extended set of functionals and with
coefficients $(1,-\mu(u_1),\ldots,-\mu(u_N))^T$. Then the matrix-vector product
above gives the data, and the quadratic form above
is the $U$-norm squared. The function is
$$
u(x)=\mu^y K(y,x)-\sum_j \mu(u_j)\lambda^y_jK(y,x),
$$
and it is the function where the alternative form
$$
P_{a_\Lambda}(\mu)=\sup_{\|u\|_U\leq 1,\Lambda(u)=0} \mu(f)
$$
of the Power function attains its supremum, up to a factor
\cite{DeMarchi-et-al:2005-1}.
Thus $\Lambda(u)=0$ and
$$
P_{a_\Lambda}(\mu)=\dfrac{\mu(u)}{\|u\|_U}.
$$
But the above discussion shows that $P^2_{a_\Lambda}(\mu)=\|u\|_U^2$,
proving $\mu(u)=P^2_{a_\Lambda}(\mu)$. This implies 
$u=P^2_{a_\Lambda}(\mu)u_{\mu,\Lambda}$ and
$$
P_{a_\Lambda}(\mu)^2=\|u\|_U^2=P^4_{a_\Lambda}(\mu)\|u_{\mu,\Lambda}\|_U^2
$$
to arrive finally at the Trade-off Principle in the form 
$$
1=P^2_{a_\Lambda}(\mu)\|u_{\mu,\Lambda}\|_U^2.
$$
{\bf Remark}: As long as the recovery map $a_\Lambda$,
the evaluation functionals and the chosen
space $U$ are fixed, there is no escape from the Trade-off Principle
in the above form by changes of bases, because both ingredients are basis-independent. 
This is in sharp contrast to the widespread opinion that basis changes help.
The observed large conditions of kernel matrices are a consequence
of the small Power Functions for the chosen spaces. But by changing the recovery
strategy, one can sacrifice small errors for better evaluation stability.
Sections \ref{SecUnCo}  and \ref{SecOut} will provide examples.

{\bf Remark}: Once the functionals are fixed, one can vary the kernel,
with respect to smoothness and scale. The Trade-off Principle will hold as an
equality in all cases.  
\section{Unsymmetric Case}\label{SecUnSy}
The previous two sections still used interpolation and Lagrangians.
But there are much more general cases, e.g. for PDE solving
by unsymmetric meshless methods.
In the latter case, users have no freedom to choose the data functionals,
because they are prescribed by the PDE to be solved. The functionals
will generate boundary values or values of the differential operator in the
interior. We further assume that the user prefers a certain
sort of trial functions that should finally approximate the true PDE solution
very well. In cases with
well-posedness in the sense of Real Analysis, it suffices to
come up with such a solution even if there is no uniqueness of the recovery
procedure \cite{schaback:2016-4}.

Before we look at the trial space,
recall that optimal Power Functions are purely dual objects,
$$
P_\Lambda(\mu)=\min_{a\in \mathbb{R}^{\lvert \Lambda\rvert}}\|\mu-\sum_{\lambda\in\Lambda}
a_\lambda \lambda\|_{U^*},
$$
not depending on trial spaces,
and will always outperform other solutions, error-wise.
Norm-minimal bump functions will also not be dependent on trial spaces. 
and if restricted to some trial space, their norm will not be minimal. 
In view of a Trade-off Principle,
this means that non-optimal recovery methods will sacrifice smaller errors
for larger stability.

Anyway, we now consider a set $\Lambda$ of data functionals
$\lambda_1,\ldots,\lambda_M$  and set of trial functions
$v_1,\ldots,v_N$ from some normed space $U$ of functions,
spanning a subspace $V$.
These two ingredients determine a generalized Vandermonde matrix
$A_{\Lambda,V}$ of size $M\times N$ with entries
$\lambda_j(v_k)$ that is in the theoretical
background, though certain algorithms will never
generate it as a whole. We also assume that there may be an unknown
numerical rank that limits the practical use of the matrix as is.
This occurs in plenty of kernel-based methods,
and even in square cases $M=N$
there may be a rank loss that occurs while the matrix condition
in the sense of MATLAB's {\tt condest} is still tolerable.

There are many ways to deal with this situation, and here we assume that
the practically applied
technique uses an $N\times M$ matrix $C$ that calculates coefficients
for the trial space basis for a given data vector $\Lambda(f)$.
By an $N$-vector $v$ of the $N$ basis functions, the result
is a function $v^TC\Lambda(f)$, and evaluation of a functional $\mu$ has the
error
\begin{equation}\label{eqmfmvCL}
\mu(f)-\mu(v)^TC\Lambda(f)
=\mu(f)-\sum_{j=1}^N\mu(v_j)\sum_{k=1}^M C_{jk}\lambda_k(f) 
=\mu(f)-\sum_{k=1}^M\mu(a_{k})\lambda_k(f) 
\end{equation}
for pseudo-Lagrangians
\begin{equation}\label{eqak}
a_{k}=\displaystyle{ \sum_{j=1}^Nv_jC_{jk}  },\,1\leq k\leq M 
\end{equation}
leading to the Power Function being the dual norm
\begin{equation}\label{eqPLCm}
P_{\Lambda,C}(\mu)=
\left\|\mu-\sum_{j=1}^N\mu(v_j)\sum_{k=1}^M C_{jk}\lambda_k\right\|_{U^*}.
\end{equation}
Bump functions are not necessarily connected to the trial space chosen.
If there exists a bump function $f_{\mu,\Lambda}$, the Trade-off Principle
(\ref{eqPLmfU}) applies for the above Power Function. The next section
will treat a special case in more detail, because it has a huge
background literature in applications.
\subsection{Unsymmetric Collocation}\label{SecUnCo}
An important example for solving PDEs via
a recovery of functions is unsymmetric collocation,
named after Edward Kansa \cite{kansa:1986-1}.
Here, we confine ourselves to a standard Poisson problem
(\ref{eqPoisson}) discretized as (\ref{eqPoissonDisc})
for simplicity. One chooses a reproducing kernel
Hilbert space ${\cal{H}}$ 
of functions on $\Omega$ that matches the expectable
smoothness of the solution, and implements the PDE via
test functionals, as sketched in Section \ref{SecDaF}.
The functionals in (\ref{eqPoissonDisc})
may be renamed as $\lambda_m, 1\leq m\leq M:=M_\Delta+M_\beta$
to match the notations used above. But note that $M$ will usually exceed $N$.

{\em Symmetric collocation} takes a space of trial functions where
these test functionals act on the kernel $K$,
and this is an optimal recovery
strategy \cite{schaback:2015-3} in the space ${\cal{H}}$,
with good convergence properties \cite{franke-schaback:1998-1}
\cite{franke-schaback:1998-2a}. The trade-off principle for
this was treated in section \ref{SecKBRP}.

The unsymmetric approach takes a set of trial functionals
$\tau_k=\delta_{z_k},\;1\leq k\leq N$ to generate trial functions
$$
v_k(x)=\tau_k^yK(y,x)=K(z_k,x),\;1\leq k\leq N,\;x\in \overline\Omega.
$$
The notation is now like in sections \ref{SecNvE} and  \ref{SecUnSy},
but we have not yet specified how we choose the
matrix $C$ of (\ref{eqmfmvCL}).

For calculation of the Power Function, we use
(\ref{eqmfmvCL}), define the pseudo-Lagrangians
$a_k$ from (\ref{eqak})
and get
$$
\begin{array}{rcl}
  P_{\Lambda,C}^2(\mu)
  &=&(\mu-\sum_k \mu(a_k)\lambda_k,\mu-\sum_k \mu(a_k)\lambda_k)_{U^*}\\
  &=&K_{\mu,\mu}-2b^TK_{\Lambda,\mu}+b^TK_{\Lambda,\Lambda}b 
\end{array} 
$$
in self-evident kernel matrix notation and $b=\mu(a)=\mu(C^Tv)$.
The Power Function for
symmetric collocation replaces $b$ by the solution $b^*$ of the system
$K_{\Lambda,\Lambda}b=K_{\Lambda,\mu}$ and therefore realizes the minimum
of the quadratic form over all possible vectors $b$. 

Figure \ref{FigPF2kansa01} shows squares of Power Functions
for unsymmetric collocation of a Poisson problem with
Dirichlet data on the unit
square. The setting has
121 regular interior points, 16 regular boundary points, 121
regular trial points
and uses a Matern-Sobolev kernel of order 5 at scale 1.
The matrix $C$ was
the pseudoinverse of the generalized Vandermonde matrix $A_{\Lambda,V}$.

The corresponding squares of
optimal Power Functions from symmetric collocation are in
Figure \ref{FigPF2kansa01symm}. They are not substantially smaller, just by a
factor of about $1/2$.
\begin{figure}
\begin{center}
\includegraphics[height=4.0cm,width=5.5cm]{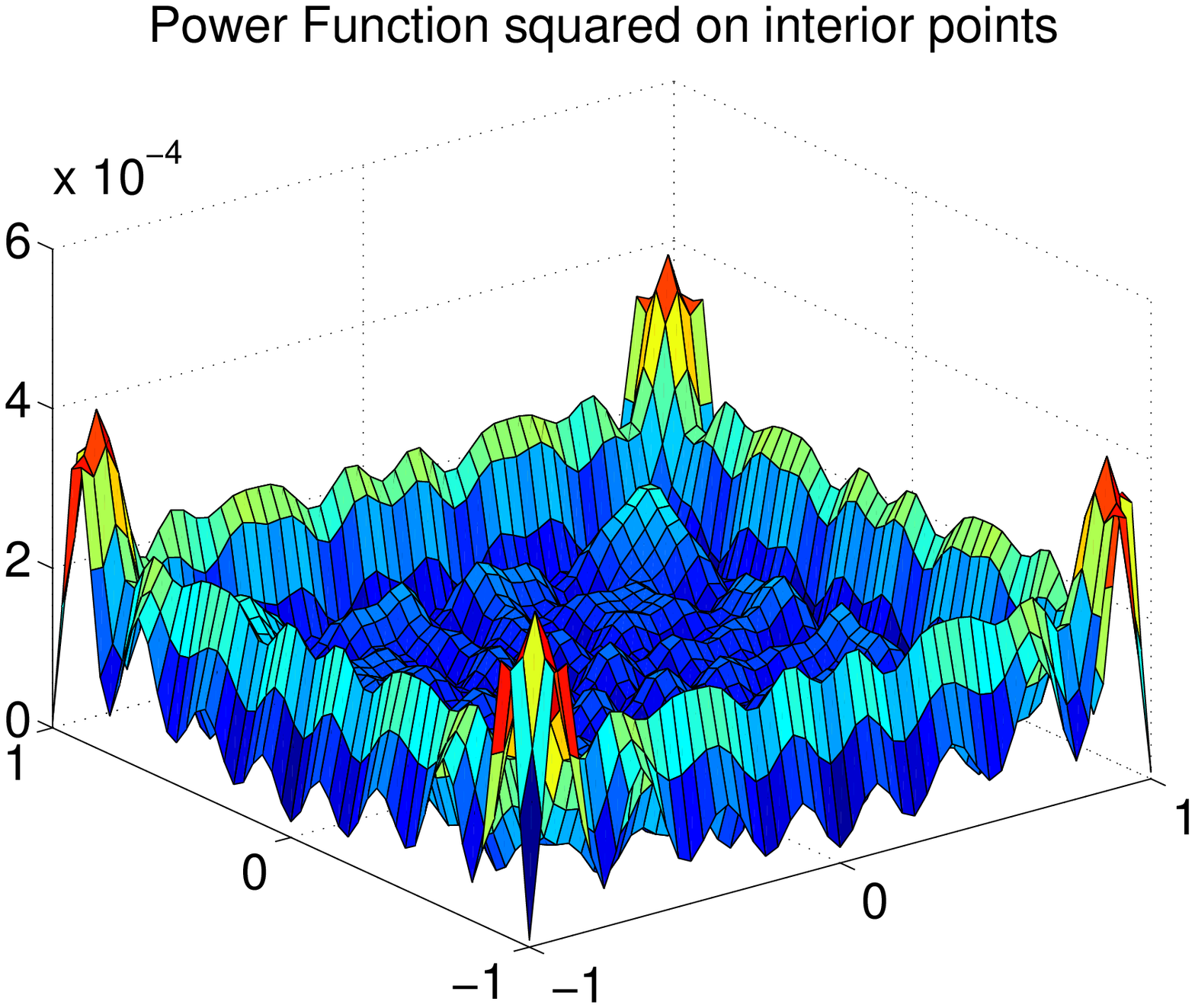} 
\includegraphics[height=4.0cm,width=5.5cm]{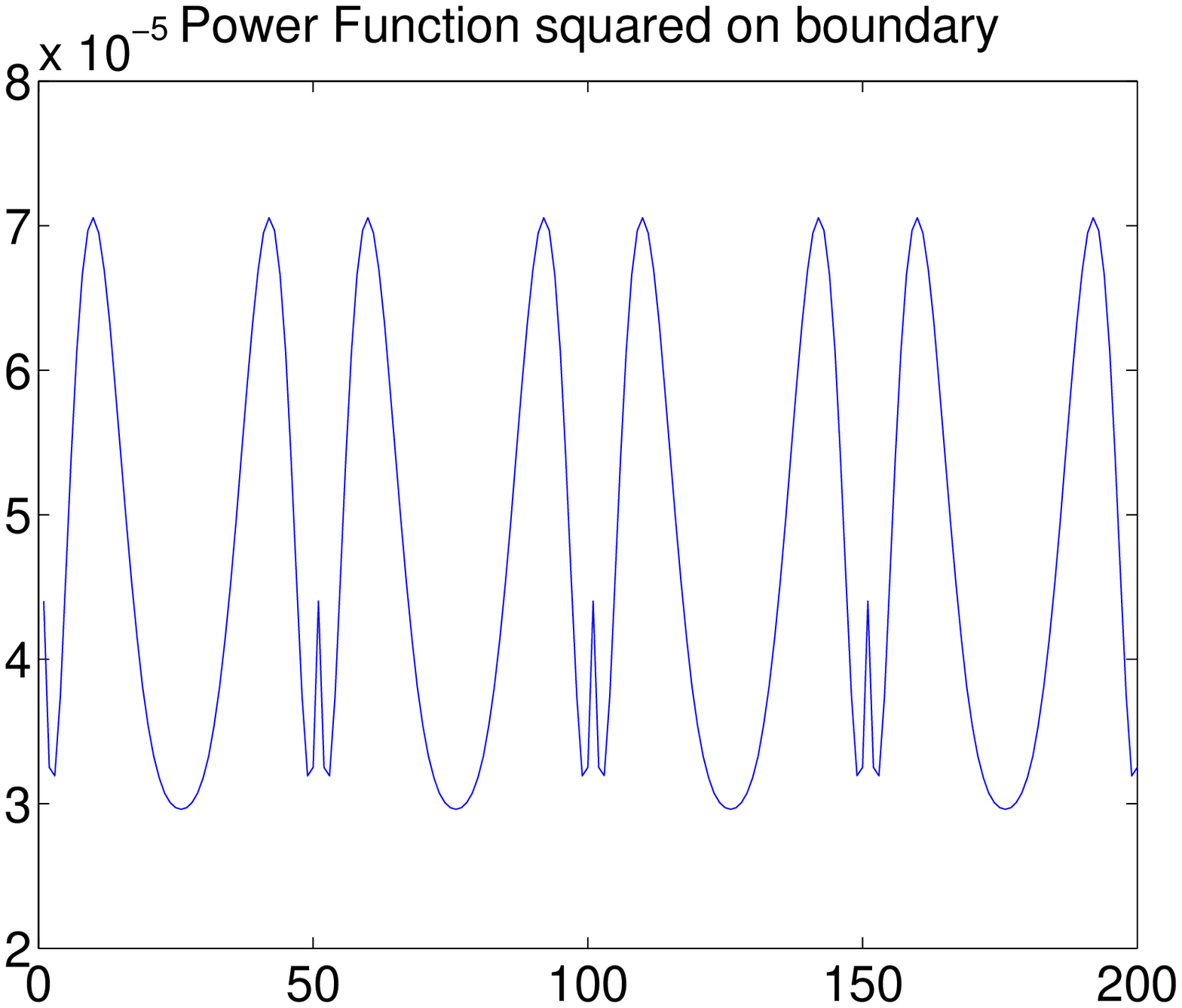}
\caption{Squares of Power Functions in interior and on
  boundary, for unsymmetric collocation \label{FigPF2kansa01}}
\end{center}  
\end{figure}
\begin{figure}
\begin{center}
\includegraphics[height=4.0cm,width=5.5cm]{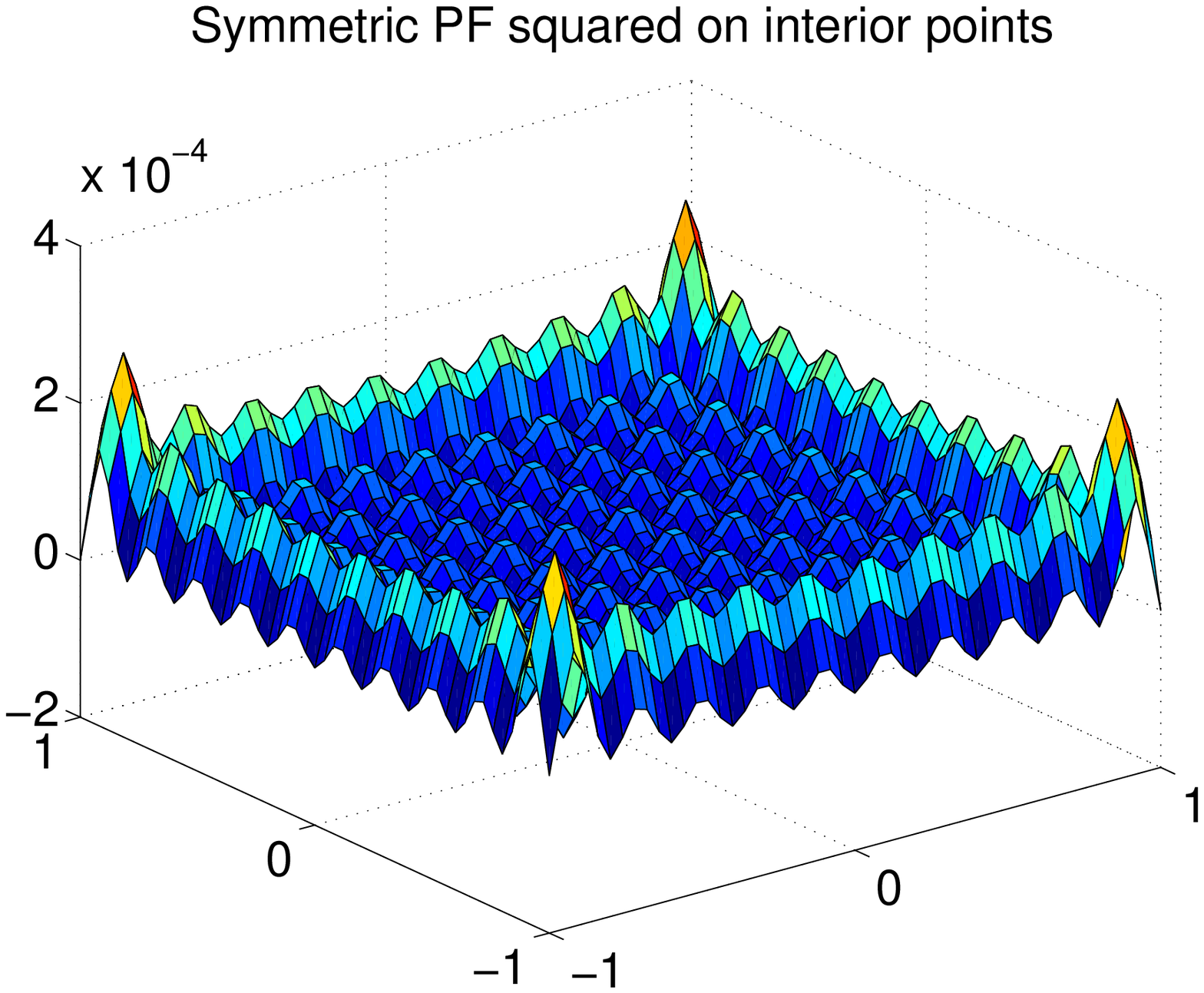} 
\includegraphics[height=4.0cm,width=5.5cm]{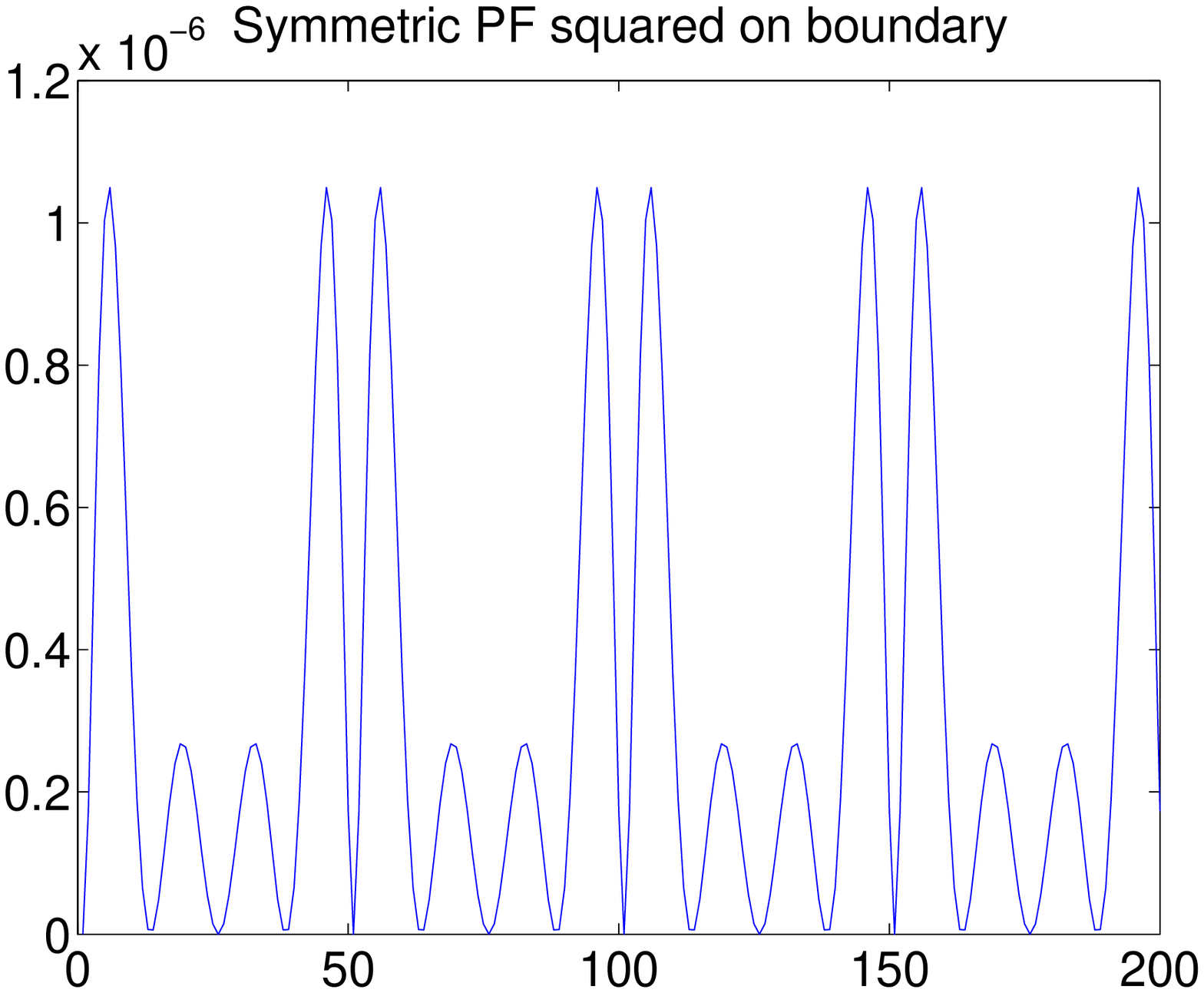}
\caption{Squares of optimal Power Functions in interior and on
  boundary, for symmetric collocation \label{FigPF2kansa01symm}}
\end{center}  
\end{figure}

Norm-minimal bump functions exist and have norms that
are related to the optimal Power Function via Theorem \ref{TheExTOP}
by equality in (\ref{eqPLlulf}) and (\ref{eqPLlulL}). They are
Lagrangians for the symmetric setting. Therefore
the right-hand sides of these inequalities get larger when
the Power Function $P_{\Lambda,C}$ is inserted. The reciprocals of
squared norms of the optimal bump functions are visualized
in Figure \ref{FigPF2kansa01symm}, because they coincide
with the square of the optimal Power Function.
Special bump functions in the
trial space of the  
unsymmetric case will usually not exist if $M> N$. 

But it may be an advantage of the unsymmetric technique that
its evaluation is based on pseudo-Lagrangians instead of Lagrangians. 

The columns of $C$ have the coefficients of the pseudo-Lagrangians
$a_k$ in the trial basis, and therefore their squared norms
$\|a_k\|_U^2$ are
in the diagonal of the matrix $C^TK_{T,T}C$ where $K_{TT}$
is the standard kernel matrix for evaluations in trial points
\cite{DeMarchi-et-al:2005-1}.
Figure
\ref{FigPsLkansa03} shows the reciprocals of these, in order to compare
with the squared optimal Power Functions in Figure \ref{FigPF2kansa01symm}.
These results come out only on the data locations, without any plotting
refinement. The values are larger by about a factor of 18 than those of the
square of the optimal Power Function,
indicating that the squared norms of the pseudo-La\-gran\-gians
are smaller than those of the Lagrangians
for the symmetric case by a factor of $1/18$. 

Summarizing, unsymmetric collocation works at a larger error level
than symmetric collocation, but
gets better evaluation stability by using pseudo-La\-gran\-gians. 

\begin{figure}
\begin{center}
\includegraphics[height=4.0cm,width=5.5cm]{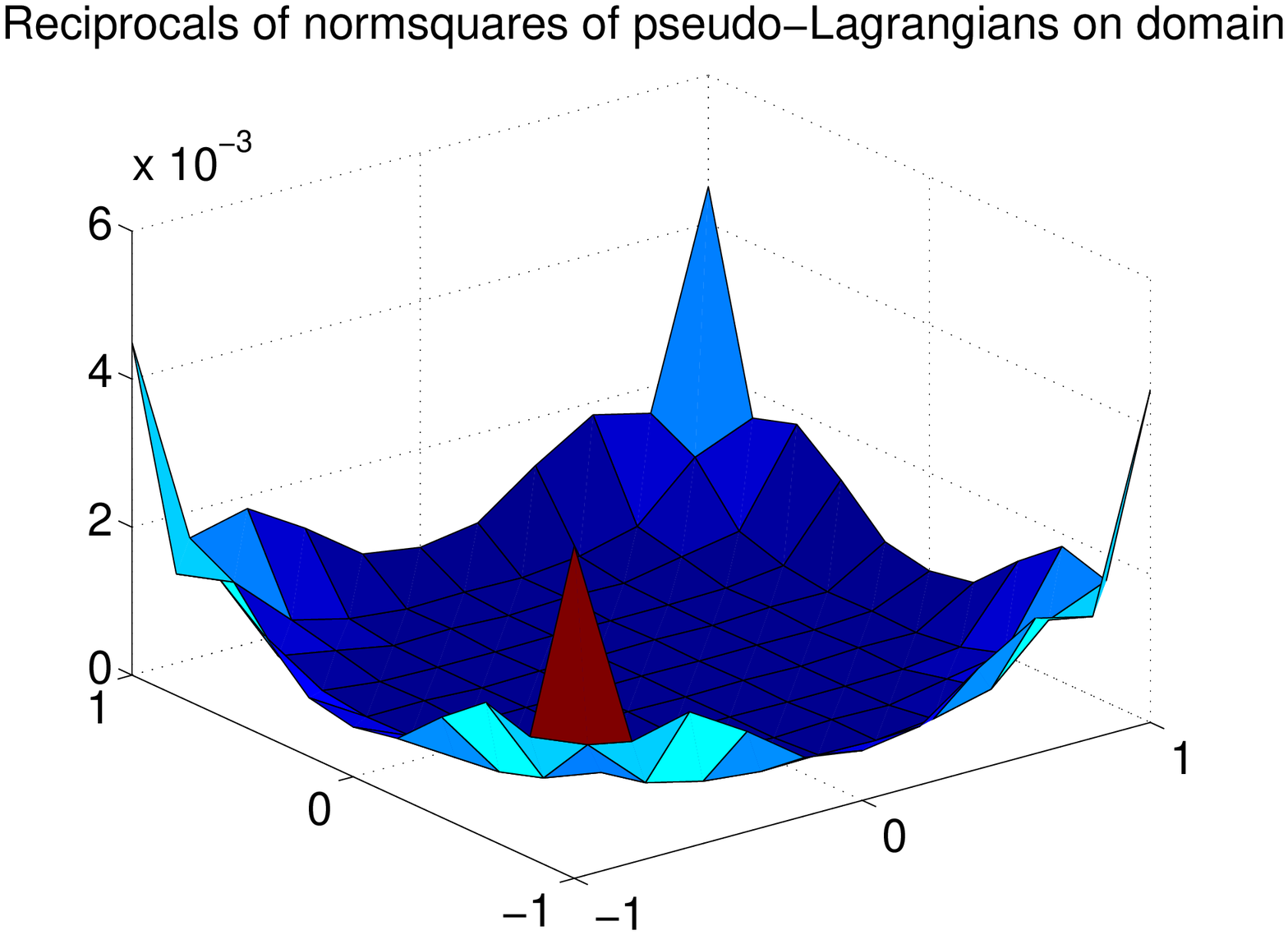} 
\includegraphics[height=4.0cm,width=5.5cm]{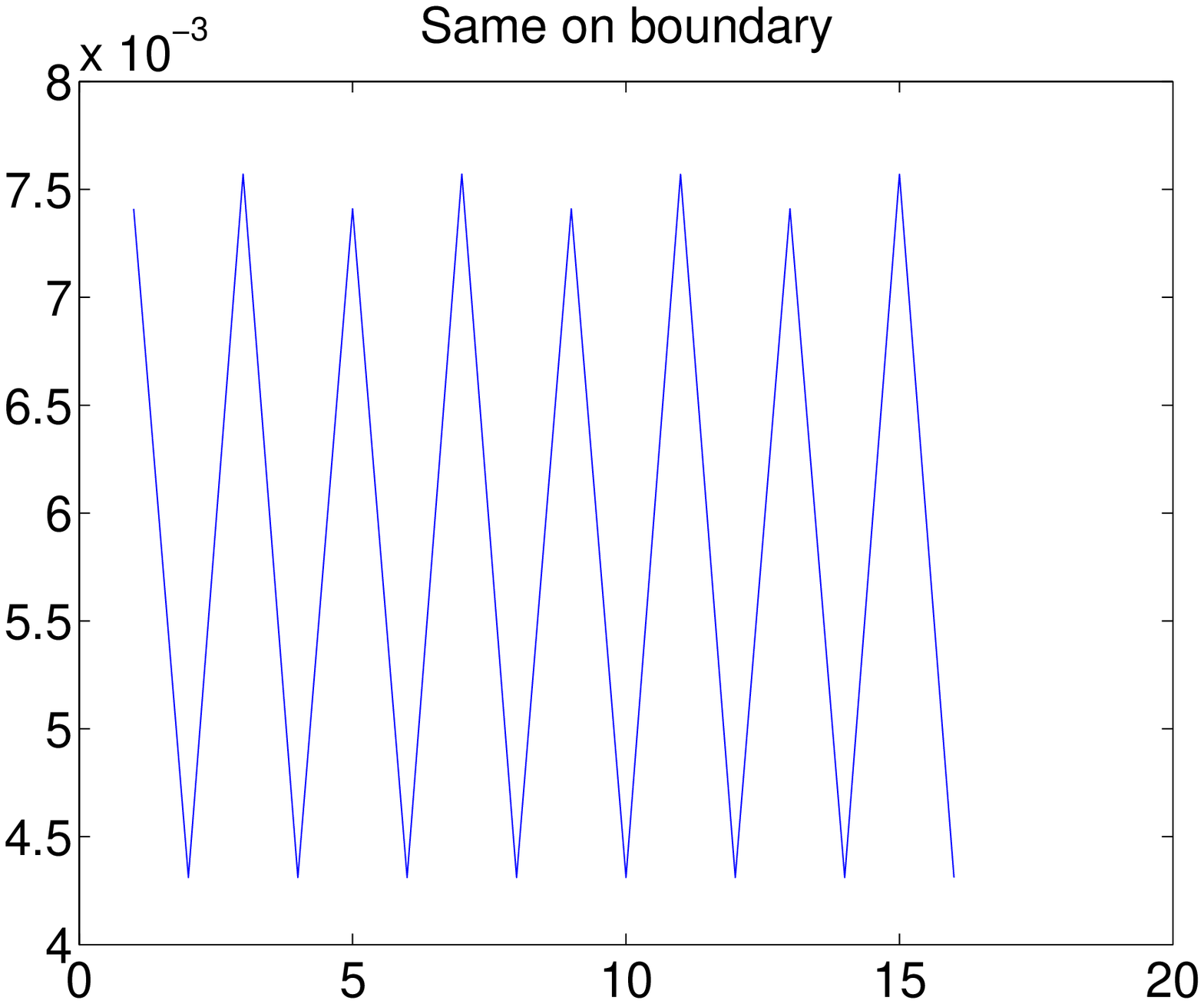}
\caption{Reciprocals of squares of norms of
  Pseudo-Lagrangians in interior and on boundary \label{FigPsLkansa03}}
\end{center}  
\end{figure}
\subsection{Non-square Linear Systems}\label{SecNLS}
Assume an overdetermined linear system $Ax\approx b$ with an
$M\times N$ matrix $A$.
The spaces $U$ and $U^*$ then are  $\mathbb{R}^M$.
After a Singular Value Decomposition,
the new $M\times N$ system is $\Sigma y\approx c$ where the diagonal of $\Sigma$
carries the $N\leq M$ nonnegative
singular values $\sigma_j,\;1\leq j\leq N$. Formally, we set
the $\sigma_j,\;N< j\leq M$ to zero. Then
$$
\begin{array}{rclll}
  \lambda_k(y) &=& \sigma_k y_k,& \hbox{ for } \sigma_k>0, & \hbox{ else } =0\\
  c_{jk} &=& \dfrac{\delta_{jk}}{\sigma_k},& \hbox{ for } \sigma_k>0, & \hbox{ else } =0\\
\end{array}
$$
if there is no regularization for small $\sigma_k$. 
By some elementary Linear Algebra,
$$
P^2_{\Lambda,C}(\mu)=\displaystyle{\sum_{\sigma_k=0}\mu_k^2   }. 
$$
All bump vectors $f_{\mu,\Lambda}$ satisfy $f_k=0$ for $\sigma_k>0$
and
$$
1=\mu(f)=\sum_{\sigma_k=0}\mu_k f_k
$$
leading to the Trade-off Principle
$$
1\leq P^2_{\Lambda,C}(\mu)\|f_{\mu,\Lambda}\|_2^2
$$
that takes the classical form (\ref{eqxy2}) here. If a Tikhonov-type
regularization uses
$$
c_{jk}(\tau) = \dfrac{\delta_{jk}}{\sigma_k+\tau}
\hbox{ for all } k,
$$
the bump functions stay the same, but the Power Function increases to
$$
P^2_{\Lambda,C(\tau)}(\mu)=\displaystyle{\sum_{k}\mu_k^2
  \dfrac{\tau^2}{(\sigma_k+\tau)^2}  }.
$$
In the square nonsingular case, the Power Function is zero
and there are no bump functions, leading back to the excluded 
$1\leq 0\cdot \infty$ situation. 
\section{Greedy Methods}\label{SecGM}
Assume the add-one-in situation, and consider an optimal 
$\mu$ to be added to $\Lambda$ for an extended problem. In view of
the Trade-off Principle, one should either take $\mu$ to maximize
$P_{a_\Lambda}(\mu)$ or to minimize $f_{\mu,\Lambda}$. In cases satisfying
Theorem \ref{TheExTOP}, these strategies coincide. This
aims at good stability and uses new functionals that cope with the current
maximal error to make it zero in the next step. For interpolation of
function values by polynomials, this leads to Leja points
(\cite{leja:1957-1}, see also the survey by St. De Marchi \cite{demarchi:2004-1}),
while for kernel-based interpolation this is the $P$-greedy method of
\cite{DeMarchi-et-al:2005-1}. Under certain additional assumptions,
these strategies are approximately optimal in the sense that they realize
$N$-widths (G. Santin and B. Haasdonk \cite{santin-haasdonk:2017-1}),
i.e. they generate trial spaces that are asymptotically optimal under all
other trial spaces of the same dimension. They can be combined with the
construction of Newton bases on-the-fly \cite{mueller-schaback:2009-1},
but we omit further details.
\section{Outlook and Open Problems}\label{SecOut}
The technique used in this paper is very elementary, and it is possible
that there are earlier results on Trade-off Principles. On instance is  
\cite{platte-et-al:2010-2} by R. Platte et.al. proving instability of
exponentially converging approximations to analytic functions. This paper proves
in general that all convergence rates have at least their exact counterpart
in rates of evaluation instability when using dual norms.

There are many more cases that fit into this paper, e.g. $h/p$ Finite Elements
or spaces of multivariate splines.
If errors are decreased by extended smoothness properties,
there always will be an increasing evaluation instability.
The connection between smoothness properties and convergence rates
is a well-known Trade-off Principle in Approximation Theory,
holding for several important cases, but a general theory
seems to be lacking.

The same holds for a hypothetical Trade-off Principle suggesting that
strongly localized methods cannot have small errors and/or large smoothness.

Handling the non-dual case is an open problem as well, in particular for
$L_\infty$ evaluation stability. 

If users have strong reasons to insist on very good accuracy
and on evaluations of high derivatives,
they have to face serious evaluation instabilities. Then it is a challenge
to cope with these, including regularizations and other changes to the
recovery map $a_\Lambda$. The literature on kernel-based
methods provides several of such techniques, e.g. Contour-Pad\'e 
\cite{fornberg-wright:2004-1}, RBF-QR \cite{fornberg-et-al:2011-1}, and 
RBF-GA \cite{fornberg-et-al:2013-1} by the group around Bengt Fornberg,
and Hilbert-Schmidt-SVD by Fasshauer/McCourt
\cite[Chapter 13]{fasshauer-mccourt:2015-1}. Greedy methods
from Section \ref{SecGM} fight evaluation 
instability by choosing functionals or evaluation points adaptively.

{\bf Acknowledgement}: The author is grateful to 
C.S. Chen of of  the University of Southern
Mississippi and Amir Noorizadegan of the National Taiwan University
for an e-mail conversation that put evaluation instability into the
focus for the Trade-off Principle. 

\backmatter


\end{document}